\numberwithin{equation}{section}
\theoremstyle{plain}
\newtheorem{thm}{Theorem}[section]
\newtheorem{prop}[thm]{Proposition}
\newtheorem{lem}[thm]{Lemma}
\newtheorem{cor}[thm]{Corollary}
\theoremstyle{definition}
\newtheorem{defn}[thm]{Definition}
\theoremstyle{remark}
\newcommand{\norm}[1]{\left\lVert#1\right\rVert}
\DeclareFontFamily{U}{MnSymbolC}{}
\DeclareSymbolFont{MnSyC}{U}{MnSymbolC}{m}{n}
\DeclareFontShape{U}{MnSymbolC}{m}{n}{
	<-6>  MnSymbolC5
	<6-7>  MnSymbolC6
	<7-8>  MnSymbolC7
	<8-9>  MnSymbolC8
	<9-10> MnSymbolC9
	<10-12> MnSymbolC10
	<12->   MnSymbolC12}{}
\DeclareMathSymbol{\intprod}{\mathbin}{MnSyC}{'270}
\begin{document}

\title[Aghil Alaee, Martin Lesourd, and Shing-Tung Yau]{A localized spacetime Penrose inequality and horizon detection with quasi-local mass}
\author{Aghil Alaee}
\address{Aghil Alaee, Department of Mathematics, Clark University, Worcester, MA 01610, USA, Center of Mathematical Sciences and Applications, Harvard University, Cambridge, MA 02138, USA}
\email{aalaeekhangha@clarku.edu,aghil.alaee@cmsa.fas.harvard.edu}
\author{Martin Lesourd}
\address{Martin Lesourd, Black Hole Initiative\\
	Harvard University, Cambridge, MA 02138, USA}
	\email{mlesourd@fas.harvard.edu}
\author{Shing-Tung Yau}
\address{Shing-Tung Yau, Department of Mathematics\\
	Harvard University, Cambridge, MA 02138, USA}
\email{yau@math.harvard.edu}

\begin{abstract}
	For an admissible class of smooth compact initial data sets with boundary, we prove a comparison theorem between the Wang/Liu-Yau quasi-local mass of the boundary and the Hawking mass of strictly minimizing hulls in the Jang graphs of the domain. Using this, we prove a quasi-local Penrose inequality that involves these quasi-local masses of the boundary and the area of an outermost marginally outer trapped surface (MOTS) in the domain or the area of minimizing minimal surface within the Jang graphs. Moreover,  we obtain sufficient conditions for the (non)existence of a MOTS within a domain, in the spirit of the folklore hoop conjecture.
\end{abstract}

\maketitle
\section{Introduction}
For asymptotically flat, complete Riemannian manifolds with nonnegative scalar curvature, Schoen-Yau proved the fundamental Positive Mass Theorem \cite{SchoenYauI}, which states that the ADM mass of the manifold is non-negative and zero only in the case of Euclidean space. By an intricate deformation argument involving the so-called Jang graph over the manifold, they generalized this result to initial data sets satisfying the dominant energy condition \cite{SchoenYauII}. An initial data set $(\Omega,g,k)$ for the Einstein field equations consists of a Riemannian  manifold $\Omega$, endowed with a metric $g$ and a symmetric 2-tensor $k$, which together satisfy the constraint equations: 
\begin{equation}
16\pi\mu=R_{g}+(\text{Tr}_gk)^2-|k|_g^2,\qquad 
8\pi J=\text{div}_g\left(k-(\text{Tr}_gk) g\right),
\end{equation}
Here, $\mu$ (scalar) and $J$ (one-form) represent, respectively, the local energy and momentum density in $\Omega$, and in this setting the dominant energy condition is the requirement that \[\mu\geq |J|_g\] 
\indent
In the special case where $k=0$, Bray \cite{Bray} and Huisken-Ilmanen \cite{HuiskenIlmanen} indepedently proved a far reaching refinement of the Positive Mass Theorem known as the Riemannian Penrose inequality. This states that the ADM mass of any asymptotically flat manifold $(\Omega,g)$ with boundary $\partial \Omega$ composed of an outward minimizing minimal surface with area $A$ should be no smaller than $\sqrt{\tfrac{A}{16\pi}}$, with equality if and only if $(\Omega,g)$ is the Schwarzschild manifold. Huisken-Ilmanen prove this when $A$ is the area of any connected component of $\partial \Omega$ using weak inverse mean curvature flow, whereas Bray's proof, which is based on a specific conformal flow of metrics, works when $A$ is taken to be the sum of the areas of the connected components of $\partial \Omega$. \\ \indent 
Combining the weak inverse mean curvature flow of Huisken-Ilmanen \cite{HuiskenIlmanen} and Miao's smoothening \cite{Miao}, Shi-Tam \cite{ShiTamHorizons} were able to prove a quasi-local mass comparison theorem for smooth simply connected domains $\Omega$ with nonnegative scalar curvature and a connected mean convex boundary with positive Gauss curvature. In that setting they proved that the Brown-York mass of $\partial\Omega$ is bounded below by the Hawking mass of the boundary of any connected minimizing hull $E$ compactly contained in $\Omega$. By taking $\partial E$ to be an outward minimizing minimal surface, this yields a quasi-local version of the Riemannian Penrose inequality for the Brown-York mass, and the relevant rigidity statement in their setting is that equality implies that $\Omega$ is a flat domain in $\mathbb{R}^3$. \\ \indent More recently, Alaee-Khuri-Yau \cite{AlaeeKhuriYau} proved a quasi-local Penrose inequality for initial data sets that involves the Liu-Yau $m_{LY}(\partial \Omega)$ and Wang-Yau $m_{WY}(\partial \Omega)$ quasi-local masses of $\Omega$. The inner boundary of their domain is a MOTS (with possibly multiple components), and their inequalities include charge and angular momentum. The proof involves a new gluing procedure between the Jang graph of $\Omega$ and its Bartnik-Shi-Tam extension \cite{ShiTam2002}, along with a deformation of the Jang graph by Khuri \cite{Khuri}, itself inspired by a Riemannian argument of Herzlich \cite{Herzlich}. In virtue of being based on Herzlich's argument, the Penrose inequalities of \cite{AlaeeKhuriYau} include a coefficient $\tfrac{\gamma}{1+\gamma}<1$ in front of $\sqrt{\frac{A}{4\pi}}$, where the constant $\gamma$, said to be of Herzlich-type, is independent of the area of the MOTS.  \\ \indent
Here, we consider the following class of initial data sets, which we call admissible, and we prove a comparison theorem for the Wang-Yau, Liu-Yau, and Hawking quasi-local masses. 
\begin{defn}
A domain $\Omega$ is admissible if:
\begin{itemize}
\item[(i)] $\Omega$ is bounded, simply connected and has smooth connected, boundary $\partial \Omega$,
\item[(ii)] $\partial \Omega$ is untrapped $H_{\partial\Omega}> |\text{Tr}_{\partial\Omega}k|$,
\item[(iii)] $(\Omega,g,k)$ is an initial data set satisfying $\mu\geq |J|$,
\item[(iv)] the $1$-form $X$ related to the Jang graph $\bar{\Omega}$ of $\Omega$ is admissible as in Definition \ref{DefX}.
\end{itemize}
\end{defn} 
For this class of initial data sets, we prove the following.
\begin{thm}\label{thm1.1}Let $\Omega$ be admissible and $\bar{E}$ be any connected minimizing hull contained in a Jang graph $\bar\Omega$ of ${\Omega}$ with $C^{1,1}$ boundary $\partial\bar{E}$. Then the following holds: 
\begin{itemize}
\item[(i)] if the Gauss curvature of $\partial\Omega$ is positive we have 
		\begin{equation*}
		m_{LY}(\partial{\Omega})\geq m_{H}(\partial\bar{E}),
		\end{equation*}
		
		\item[(ii)]if there is an admissible time function $\tau$ we have 
		\begin{equation*}\label{mWYandmHE}
		m_{WY}(\partial{\Omega})\geq m_{H}(\partial\bar{E}).
		\end{equation*}
\end{itemize}
\end{thm}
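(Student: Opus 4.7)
The plan is to combine the Schoen--Yau Jang-graph deformation with a Bartnik--Shi--Tam exterior extension and Huisken--Ilmanen weak inverse mean curvature flow (IMCF), in the spirit of \cite{ShiTamHorizons, AlaeeKhuriYau}. Concretely, I want to glue $\bar{\Omega}$ to an asymptotically flat collar whose ADM mass is bounded above by $m_{LY}(\partial\Omega)$ in case (i) or $m_{WY}(\partial\Omega)$ in case (ii), then run IMCF outward from $\partial\bar{E}$ and appeal to Geroch monotonicity of the Hawking mass.

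First, the Jang graph $\bar{\Omega}$ carries a metric $\bar{g}=g+d\bar{f}\otimes d\bar{f}$ whose scalar curvature satisfies the Schoen--Yau identity
\[
R_{\bar{g}} = 16\pi(\mu - J(w)) + |h-k|_{\bar{g}}^2 + 2|X|_{\bar{g}}^2 - 2\,\text{div}_{\bar{g}} X,
\]
with $X$ the one-form of Definition \ref{DefX}. The dominant energy condition (iii) makes the first three terms nonnegative, and admissibility (iv) of $X$ is what allows the surviving divergence to be absorbed later. Moreover $\partial\bar{\Omega}$ inherits the induced metric of $\partial\Omega$, and under the untrapped condition (ii) its mean curvature $\bar{H}$ in $(\bar{\Omega},\bar{g})$ can be arranged to satisfy $\bar{H}\leq\sqrt{H_{\partial\Omega}^2-(\text{Tr}_{\partial\Omega}k)^2}=|H|$, the Lorentzian norm of the mean curvature vector appearing in $m_{LY}$ and $m_{WY}$.

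Next, isometrically embed $\partial\Omega$ into $\mathbb{R}^3$ (case (i), using Nirenberg--Pogorelov under positive Gauss curvature) or, via the admissible $\tau$, into Minkowski space $\mathbb{R}^{3,1}$ (case (ii)); let $H_0$ denote the mean curvature of the image. Apply the Bartnik--Shi--Tam construction \cite{ShiTam2002} to build a smooth asymptotically flat exterior with nonnegative scalar curvature whose boundary mean curvature equals $\bar{H}$, and glue it to $\bar{\Omega}$ along $\partial\bar{\Omega}$, Miao-smoothing \cite{Miao} at the interface. Shi--Tam's monotonicity then yields
\[
m_{ADM}(M) \;\leq\; \frac{1}{8\pi}\int_{\partial\Omega}(H_0-\bar{H}) \;\leq\; m_{LY}(\partial\Omega)\ \ (\text{resp.}\ m_{WY}(\partial\Omega)).
\]
In the resulting asymptotically flat $M$, start Huisken--Ilmanen weak IMCF from $\partial\bar{E}$; the $C^{1,1}$ minimizing-hull hypothesis is exactly the regularity and geometric condition required to initiate the weak flow. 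Geroch monotonicity gives $m_H(\partial\bar{E})\leq \lim_{t\to\infty}m_H(\Sigma_t)=m_{ADM}(M)$, and chaining with the preceding display finishes the proof.

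The principal obstacle is ensuring Geroch monotonicity survives the divergence term $-2\,\text{div}_{\bar{g}}X$ in $R_{\bar{g}}$. The standard Geroch identity integrates $R_{\bar{g}}$ over each flow surface $\Sigma_t$, and rewriting this contribution via the tangential Stokes formula produces a residual of the form $\int_{\Sigma_t}[H\langle X,\nu\rangle+\partial_\nu\langle X,\nu\rangle]$; condition (iv) on $X$ should be engineered exactly so that this residual either has a favorable sign or is absorbed into $2|X|^2$ by a Cauchy--Schwarz estimate, in the manner of Khuri's Jang-plus-IMCF hybrid \cite{Khuri}. A secondary delicate point arises in case (ii): the time function $\tau$ must be compatible with the Jang potential $\bar{f}$ on $\partial\Omega$ so that the Wang--Yau isometric embedding into $\mathbb{R}^{3,1}$ supplies the precise boundary datum consumed by the Shi--Tam extension, and this compatibility is the content of admissibility of $\tau$.
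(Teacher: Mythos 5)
Your overall architecture (Jang graph, Bartnik--Shi--Tam collar, Miao smoothing, weak IMCF plus Geroch monotonicity) matches the paper's, but the step you yourself flag as ``the principal obstacle'' is where the argument actually breaks, and your proposed fix is not the paper's and would not deliver the stated inequality. You propose to keep the Jang metric as is and absorb the $-2\operatorname{div}_{\bar g}X$ term in $\bar R$ surface-by-surface along the flow, via a tangential Stokes formula and a Cauchy--Schwarz absorption into $2|X|^2$. The residual normal-derivative terms of the type $\partial_\nu\langle X,\nu\rangle$ produced this way are not controlled by $|X|^2$, and this mechanism is precisely what forces the Herzlich-type deficit constant $\tfrac{\gamma}{1+\gamma}<1$ in \cite{Khuri,AlaeeKhuriYau}; the whole point of Theorem \ref{thm1.1} is to avoid that constant. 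The paper's route is different: it first makes the scalar curvature genuinely nonnegative by a \emph{global} conformal deformation $\hat{\mathbf g}_\delta=u_\delta^4\bar{\mathbf g}_\delta$ (Proposition \ref{prop2}), where $u_\delta=1+w_\delta$ solves a linear equation with source built from $\kappa=K_{\delta-}+2\left(\operatorname{div}_{\bar{\mathbf g}_\delta}\mathbf X_\delta\right)_+$; the $L^1$, $L^{3/2}$, $L^{6/5}$ bounds in Definition \ref{DefX} are exactly what make this PDE solvable with a controlled expansion coefficient $A_\delta$, so that $m_{ADM}(\hat{\mathbf g}_\delta)\le m_{ADM}(g_+)+\tfrac{1}{8\pi}\int_{\partial\bar\Omega}X(\bar\nu)\,dA_{\hat\sigma}$. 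You have misidentified what admissibility of $X$ is engineered for.

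The second gap is the boundary mass accounting, which must supply the budget for that conformal mass increase. Your chain $m_{ADM}(M)\le\tfrac{1}{8\pi}\int(H_0-\bar H)\le m_{LY}(\partial\Omega)$ is inconsistent with your own assertion $\bar H\le|\vec H|$ (that inequality points the other way), and more importantly it drops the term $\tfrac{1}{8\pi}\int_{\partial\bar\Omega}X(\bar\nu)\,dA_{\hat\sigma}$ entirely. The paper builds the extension with boundary mean curvature $\bar H$ (rather than $\bar H-X(\bar\nu)$ as in \cite{LiuYau,WangYauPMT1}), obtains $\tfrac{1}{8\pi}\int(\hat H_0-\bar H)\ge m_{ADM}(g_+)$ from \cite{ShiTam2002}, and then uses $m_{WY}(\partial\Omega)\ge\tfrac{1}{8\pi}\int\bigl(\hat H_0-(\bar H-X(\bar\nu))\bigr)=\tfrac{1}{8\pi}\int(\hat H_0-\bar H)+\tfrac{1}{8\pi}\int X(\bar\nu)$; the surplus $\tfrac{1}{8\pi}\int X(\bar\nu)$ exactly covers the ADM increase from Proposition \ref{prop2}, and condition \eqref{A1} in Definition \ref{DefX} is what guarantees $2A_\delta$ fits inside it. Without both ingredients your argument cannot close. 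A smaller point: the flow should be started from the strictly minimizing hull $\bar F'$ of a smooth approximation $\bar F\supset\bar E$, not from $\partial\bar E$ itself, with the area and mean-curvature comparisons of \cite{ShiTamHorizons} (and the boundedness of $u_\delta$, including on a blow-up cylinder) used to return to $m_H(\partial\bar E)$.
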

Theorem \ref{thm1.1} implies a quasi-local Penrose inequality for initial data sets \textit{without} a Herzlich-type constant. Indeed, although an outward minimizing minimal surface in the Jang graph (which is clearly a minimizing hull therein) does not in general project to a minimal surface in $\Omega$, the area of these surfaces are larger than their projection in $\Omega$, and so as a consequence we obtain a Penrose inequality for their projection in $\Omega$.
\begin{cor}\label{cor1}
Let $\Omega$ be admissible and $S$ be a closed surface in $\Omega$. Assume that either $S$ is a MOTS and its projection in the blow up Jang graph $\bar{\Omega}$ of $\Omega$ is outward minimizing, or $S$ has an outward minimizing minimal surface projection in a Jang graph $\bar{\Omega}$ of $\Omega$.
Then the following holds:
\begin{itemize}
\item[(i)] if the Gauss curvature of $\partial\Omega$ is positive, we have
\begin{equation*}
m_{LY}(\partial{\Omega})\geq\sqrt{\frac{|S|}{16\pi}},
\end{equation*}
\item[(ii)] if there is an admissible time function $\tau$, we have
\begin{equation*}
m_{WY}(\partial{\Omega})\geq\sqrt{\frac{|S|}{16\pi}}.
\end{equation*}
\end{itemize}
\end{cor}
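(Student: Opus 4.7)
The plan is to reduce Corollary \ref{cor1} to Theorem \ref{thm1.1} by choosing a minimizing hull $\bar E\subset\bar\Omega$ whose boundary $\partial\bar E$ is a minimal surface in the (blow-up) Jang graph, so that the Hawking mass collapses to $m_H(\partial\bar E)=\sqrt{|\partial\bar E|/(16\pi)}$. The remaining task is then to compare $|\partial\bar E|$ in $\bar\Omega$ with $|S|$ in $\Omega$ using the fact that the vertical projection $\pi\colon\bar\Omega\to\Omega$ is $1$-Lipschitz.

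The first step is to identify $\partial\bar E$ in each scenario. In the second scenario, the hypothesised outward minimizing minimal surface in $\bar\Omega$ that projects to $S$ serves directly as $\partial\bar E$: it is smooth (hence $C^{1,1}$), and being outward minimizing it is a minimizing hull in the sense of Theorem \ref{thm1.1}. In the first scenario, $S$ is a MOTS, which forces the Jang equation to blow up at $S$ so that $\bar\Omega$ acquires a cylindrical end over $S$ whose cross sections are minimal and asymptotically isometric to $(S,g|_S)$; by hypothesis these are outward minimizing, so we take $\partial\bar E$ to be such a cross section (or a limit of such cross sections along the cylinder). In either case, Theorem \ref{thm1.1} gives
\[
m_{LY}(\partial\Omega)\ge m_H(\partial\bar E)\ \text{under (i)},\qquad m_{WY}(\partial\Omega)\ge m_H(\partial\bar E)\ \text{under (ii)}.
\]

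The second step is the area comparison $|\partial\bar E|_{\bar g}\ge |S|_g$. This is pure Riemannian geometry: writing $\bar\Omega$ as the graph of a function $f$ over $\Omega$ with induced metric $\bar g=g+df\otimes df\ge g$, the projection $\pi$ is $1$-Lipschitz and therefore area-nonincreasing. Since $\pi(\partial\bar E)=S$ by construction, we obtain $|\partial\bar E|_{\bar g}\ge |S|_g$. Combined with $H_{\partial\bar E}=0$ yielding $m_H(\partial\bar E)=\sqrt{|\partial\bar E|/(16\pi)}$, this gives
\[
m_{LY/WY}(\partial\Omega)\ \ge\ \sqrt{\frac{|\partial\bar E|}{16\pi}}\ \ge\ \sqrt{\frac{|S|}{16\pi}},
\]
which is the claimed Penrose-type inequality.

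The main obstacle I anticipate is the MOTS scenario: the blow-up Jang graph is noncompact with a cylindrical end over $S$, so existence of a $C^{1,1}$ outward minimizing hull there, together with the identity $\pi(\partial\bar E)=S$, must be justified carefully, presumably by exhausting the cylindrical end with compact slices, applying Theorem \ref{thm1.1} at each level, and passing to a limit in which the boundary areas converge to $|S|$. The second scenario, where the minimal surface in $\bar\Omega$ is given from the outset, is much more direct and is essentially an immediate application of Theorem \ref{thm1.1} combined with the area-nonincreasing projection argument.
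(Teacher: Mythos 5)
Your proposal is correct and follows essentially the same route as the paper: apply Theorem \ref{thm1.1} to the outward minimizing minimal surface $\bar{S}$ in the (blow-up) Jang graph corresponding to $S$, use $H_{\bar{S}}=0$ to get $m_H(\bar{S})=\sqrt{|\bar{S}|/16\pi}$, and conclude via $|\bar{S}|\geq|S|$ from $\bar{g}=g+df^2\geq g$. The paper's own proof is exactly this three-line argument and does not elaborate on the cylindrical-end subtleties you flag, since the hypothesis already grants that the relevant surface in $\bar{\Omega}$ is outward minimizing.
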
 
Theorem \ref{thm1.1} and Corollary \ref{cor1} also lead to MOTS (non)existence results. Such problems are well known, eg.\@ Yau's problem list, pg.\@ 371-372 \cite{SchoenYauL}, and Bartnik's list, pg.\@ 260 \cite{Bartnik1989}. They also lie in the spirit of Thorne's so-called hoop conjecture, which hypothesises that black holes form in a domain $\Omega$ if and only if the enclosed mass $M(\Omega)$ satisfies $M(\Omega)\geq a C(\partial \Omega)$ where $a$ is a universal constant and $C(\partial \Omega)$ is some notion of the circumference of $\partial \Omega$. The original formulation leaves the definitions of $M(\Omega)$ and $C(\partial \Omega)$ open. \\ \indent
The first such result, due to Schoen-Yau \cite{SchoenYauIII}, gives a sufficient condition for the existence of a MOTS in $\Omega$. Yau \cite{Yau} later refined the argument by weakening the condition imposed on the matter density in favour of a slightly stronger lower bound on the mean curvature of $\partial \Omega$ - an observation which later influenced the Liu-Yau \cite{LiuYau} and Wang-Yau \cite{WangYauPMT1} definitions of quasi-local mass. Schoen-Yau \cite{SchoenYauIII} show that an initial data set $(\Omega,g,k)$, where $\partial \Omega$ smooth, connected, outer untrapped, contains a MOTS if $\mu - |J|_g \geq \Lambda$ in $\Omega$ and that $\text{Rad}(\Omega)\geq \sqrt{\frac{3}{2}}\frac{\pi}{\sqrt{\Lambda}}$, where $\text{Rad}(\Omega)$ is a geometric quantity measuring the size of $\Omega$. Getting a MOTS existence result by assuming something about the quasi-local quantities (as opposed to pointwise) has been open since their work.\\ \indent
Using quasi-local mass makes the problem harder because quasi-local mass tends to be highly non-coercive with respect to the interior geometry. Already in the Riemannian setting, it is straightforward to construct domains with the same boundary data (and thus quasi-local mass) with one containing a minimal surface and the other one not, cf.\@ Section 2 of \cite{ShiTamHorizons}.  In spite of this, Shi-Tam \cite{ShiTamHorizons} were able to use quasi-local mass to give sufficient conditions for the existence of minimal surfaces within a compact domain with boundary. They define a quantity $m_{ST}(\Omega)$ - cf.\@ Definition \ref{massshitam} - and show that $m_{BY}(\partial \Omega)\geq m_{ST}(\Omega)$ if $\Omega$ is absent of minimal surfaces. They produce a minimal surface by reversing this inequality.  \\ \indent 
In trying to generalize their ideas to initial data sets, one faces various additional obstacles (over and above the non-coercivity aforementioned). One of these is genuinely Lorentzian in nature, and seems to have been first observed by Iyer-Wald \cite{IyerWald}. They constructed a foliation of a globally hyperbolic subset of the maximally extended Schwarzschild solution that gets arbitrarily close to the $r=0$ black hole singularity but which has the property that none of its leaves contain either an outer trapped surface or a MOTS. So in spite of the region being filled with outer trapped surfaces, none of these actually register on leaves of the foliation. This means that even if the initial data set isometrically embeds as a spacelike hypersurface in a spacetime region of strong gravity, it may be that no trapped surfaces (or MOTS) actually register on this hypersurface. One can thus anticipate that the initial data set must satisfy certain conditions other than simply being isometrically embeddable as a spacelike hypersurface in a spacetime satisfying some suitable energy condition.    \\ \indent 
Based on Theorem \ref{thm1.1}, we define a new quantity $m^*(\Omega;\bar{\Omega})$ - cf.\@ Definition \ref{newmass} - which yields the following MOTS existence result. 
\begin{thm}\label{thm1.4}
Let $\Omega$ be admissible. Assume that the Gauss curvature of $\partial \Omega$ is positive. If either
\[m^*(\Omega;\bar{\Omega})> m_{LY}(\Omega)\]

or \[m^*(\Omega;\bar{\Omega})\geq \frac{1}{4}\textnormal{diam}(\partial \Omega)\]
then there is a MOTS and outer trapped surfaces in $\Omega$. The same statement holds for $m_{WY}(\partial \Omega)$ provided we replace the Gauss curvature assumption by the condition that there exists an admissible $\tau$ on $\partial \Omega$.
\end{thm}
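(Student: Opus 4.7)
The plan is a contradiction argument in the spirit of Shi--Tam's horizon-detection scheme \cite{ShiTamHorizons}. Assume $\Omega$ contains no MOTS. Then Schoen--Yau's analysis of the Jang equation produces a smooth global Jang graph $\bar\Omega$ over $\Omega$ without cylindrical blow-up, and the Huisken--Ilmanen theory delivers connected strictly minimizing hulls $\bar E \subset \bar\Omega$ with $C^{1,1}$ boundary. Theorem \ref{thm1.1}(i) applied to each such $\bar E$ gives $m_{LY}(\partial\Omega)\geq m_H(\partial\bar E)$; taking the supremum over such hulls, which by Definition \ref{newmass} is exactly $m^*(\Omega;\bar\Omega)$, yields
\[
m_{LY}(\partial\Omega)\geq m^*(\Omega;\bar\Omega).
\]
This directly contradicts the first hypothesis $m^*(\Omega;\bar\Omega)>m_{LY}(\partial\Omega)$, forcing a MOTS in $\Omega$. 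The Wang--Yau version is parallel, using Theorem \ref{thm1.1}(ii) with an admissible time function $\tau$.

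For the diameter hypothesis, I would complement the above with the a priori upper bound $m_{LY}(\partial\Omega)\leq \tfrac{1}{4}\textnormal{diam}(\partial\Omega)$. The positive Gauss curvature of $\partial\Omega$ lets me invoke Weyl's embedding theorem to realize $\partial\Omega$ isometrically as a strictly convex surface $\Sigma_0 \subset \mathbb{R}^3$ with mean curvature $H_0$, and the untrapped condition (admissibility item (ii)) keeps the argument of the square root nonnegative, so
\[
m_{LY}(\partial\Omega) = \frac{1}{8\pi}\int_{\partial\Omega}\Big(H_0-\sqrt{H^2-(\text{Tr}_{\partial\Omega}k)^2}\,\Big) \leq \frac{1}{8\pi}\int_{\Sigma_0}H_0.
\]
Cauchy's formula for convex surfaces identifies $\int_{\Sigma_0}H_0$ with a constant multiple of the mean width of the convex body bounded by $\Sigma_0$, and bounding the mean width by $\textnormal{diam}(\partial\Omega)$ delivers the factor $\tfrac{1}{4}$. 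Chained with the first step, the inequality $m^*(\Omega;\bar\Omega) \geq \tfrac{1}{4}\textnormal{diam}(\partial\Omega)$ becomes incompatible with the absence of MOTS.

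Finally, upgrading MOTS existence to the presence of outer trapped surfaces is standard: produce an outermost MOTS via Andersson--Metzger, then perturb it slightly inward along its outgoing null direction, using the stability operator of the outermost MOTS as a barrier, to obtain surfaces with strictly negative outer null expansion. The step I expect to be the main obstacle is verifying that Theorem \ref{thm1.1} applies uniformly to the family of hulls over which $m^*(\Omega;\bar\Omega)$ is defined: one has to check that the Huisken--Ilmanen minimizing-hull theory extends to the Jang graph $\bar\Omega$ up to and including its boundary, and that the required $C^{1,1}$-regularity of $\partial\bar E$ persists under the supremum — both requiring delicate boundary regularity considerations for minimizing hulls inside a Jang graph that may have rough behavior near $\partial\bar\Omega$.
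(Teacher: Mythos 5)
Your contradiction framework (assume no MOTS, get a smooth bounded Jang graph, and derive $m_{LY}(\partial\Omega)\geq m^*(\Omega;\bar\Omega)$) is the right skeleton, but the central step is broken by a misreading of Definition \ref{newmass}. You assert that $m^*(\Omega;\bar\Omega)$ is ``exactly'' the supremum of $m_H(\partial\bar E)$ over minimizing hulls $\bar E$ of $\bar\Omega$, to which Theorem \ref{thm1.1} applies directly. It is not: $m^*(\Omega;\bar\Omega)$ is a supremum over sets $\bar E\in F^*_{\bar\Omega_2}$ that are minimizing hulls only with respect to a \emph{subdomain} $\bar\Omega_2\subset\bar\Omega$, weighted by the constant $\alpha^*_{\bar\Omega_1,\bar\Omega_2}$. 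Such an $\bar E$ need not be a minimizing hull of $\bar\Omega$, so Theorem \ref{thm1.1} does not apply to it. This is not a technicality --- it is the entire content of the detection mechanism. If the competitors were hulls of $\bar\Omega$ itself, Theorem \ref{thm1.1} would force $m_{LY}(\partial\Omega)\geq m^*(\Omega;\bar\Omega)$ unconditionally (MOTS or not), the hypothesis $m^*>m_{LY}$ could never hold, and the theorem would be vacuous. The paper bridges the gap by replacing $\bar E$ with its strictly minimizing hull $\bar E'$ in all of $\bar\Omega$ (to which Theorem \ref{thm1.1} does apply), using the condition $\partial\bar E\cap\partial\bar V\neq\emptyset$ built into $F^*_{\bar\Omega_2}$ to rule out the ``jump'' case where $\partial\bar E'$ and $\partial\bar E$ are disjoint, and then controlling the ratio $|\partial\bar E'|/|\partial\bar E|$: either $\partial\bar E'\subset\bar\Omega_2$ and the ratio is $\geq 1$, or $\partial\bar E'$ exits $\bar\Omega_2$ and the Meeks--Yau lower bound on the area of the minimal portion of $\partial\bar E'$ gives $|\partial\bar E'|\geq{\alpha^*}^2_{\bar\Omega_1,\bar\Omega_2}|\partial\bar E|$ --- which is precisely where the constant $\alpha^*$ in the definition comes from. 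None of this appears in your proposal, and without it the first inequality does not follow.

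The diameter part of your argument is closer in spirit to the paper's: both reduce to the a priori bound $m_{LY}(\partial\Omega)<\frac{1}{8\pi}\int_{\partial\Omega}H_0$ and then estimate $\int H_0$ by convex geometry (you via Cauchy's formula and the mean width, the paper via the smallest circumscribing sphere, the Minkowski-type formula $\int H_0\leq R\int K_{\partial\Omega}$ from Klingenberg, and Gauss--Bonnet). Either route is acceptable in principle, though you should check that the mean-width bound actually delivers the constant $\tfrac14$ rather than $\tfrac12$ under the paper's normalization of $H_0$; and note that this branch still relies on the first inequality $m_{LY}\geq m^*$, so it inherits the gap above. The closing paragraph about perturbing an outermost MOTS to get outer trapped surfaces, and the worry about boundary regularity of hulls in the Jang graph, are reasonable but peripheral; the essential missing ingredient is the passage from subdomain-minimizing hulls to global ones via $\bar E'$ and the Meeks--Yau area estimate.
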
 
Among the difficulties associated with initial data sets is the lack of scalar curvature nonnegativity. This is crucial for the monotonicity of the Hawking mass under the weak inverse mean curvature flow, which indeed underpins the entire argument in \cite{ShiTamHorizons}. \\ \indent 
Another issue that arises in the spacetime but not in the Riemannian context is that MOTS are not known to minimize some functional. Thus it is not \textit{a priori} clear that notions like \textit{minimizing hulls}, defined in \cite{HuiskenIlmanen} and crucial for the minimal surface detection argument in \cite{ShiTamHorizons}, will be of any use in the spacetime context. \\ \indent
The main ingredients of the proofs of our results combine ideas and results from Schoen-Yau \cite{SchoenYauI,SchoenYauII,Yau}, Huisken-Ilmanen's weak inverse mean curvature flow \cite{HuiskenIlmanen}, the Bartnik-Shi-Tam extension of \cite{ShiTam2002}, Shi-Tam's new quasi-local mass \cite{ShiTamHorizons}, and the recent smoothening procedure of Alaee-Khuri-Yau \cite{AlaeeKhuriYau}, which itself is based on that of Miao \cite{Miao}. \\ \indent
Using $m_{ST}(\Omega)$, we can also guarantee the existence of minimal seperating spheres in $\bar{\Omega}$. 
\begin{prop}\label{prop1.5}
Let $\Omega$ be admissible. If the Gauss curvature of $\partial\Omega$ is positive and there is an isoperimetric surface $\bar{V}\subset \bar{\Omega}$ with $m_{H}(\bar{V})\geq m_{LY}(\partial \Omega)$ for a Jang graph $\bar{\Omega}$ of ${\Omega}$, then there is a seperating outward minimizing minimal sphere in $\bar{\Omega}$. The same statement holds if there is an admissible time function $\tau$ and $m_{H}(\bar{V})\geq m_{WY}(\partial \Omega)$. 
\end{prop}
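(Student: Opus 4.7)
My plan is to prove the contrapositive: assuming no separating outward minimizing minimal sphere exists in $\bar\Omega$, I would show that $m_H(\bar V) < m_{LY}(\partial\Omega)$ for every isoperimetric surface $\bar V\subset\bar\Omega$ (and analogously for $m_{WY}$). First I reduce to the case where $\bar V$ is itself a minimizing hull in $\bar\Omega$. If it is not, then by the Huisken--Ilmanen theory the strictly outer minimizing hull $\bar E$ of the region enclosed by $\bar V$ has $C^{1,1}$ boundary, and each component of $\partial\bar E$ not contained in $\partial\bar V$ is a stable outward minimizing minimal surface. Since the Jang graph (after the deformation/smoothing procedure used in the proof of Theorem \ref{thm1.1}) carries nonnegative scalar curvature, each such component is topologically a sphere by the Schoen--Yau classification; by simple connectivity of $\bar\Omega$ it automatically separates. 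This contradicts the standing assumption, so $\bar V$ must be a (smooth, and in particular $C^{1,1}$) minimizing hull.

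Theorem \ref{thm1.1}(i) then applies with $\bar E = \bar V$, giving $m_{LY}(\partial\Omega)\geq m_H(\bar V)$. Combined with the hypothesis $m_H(\bar V)\geq m_{LY}(\partial\Omega)$, equality is forced:
\begin{equation*}
m_H(\bar V) \;=\; m_{LY}(\partial\Omega).
\end{equation*}
The proof of Theorem \ref{thm1.1} proceeds by gluing $\bar\Omega$ along $\partial\Omega$ to a Bartnik--Shi--Tam extension as in \cite{ShiTam2002,AlaeeKhuriYau} and running weak inverse mean curvature flow from $\bar V$ out to infinity, invoking the strict Hawking--Geroch monotonicity of $m_H$. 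Saturation of that monotonicity forces each IMCF leaf to be umbilic with the same Hawking mass; by Schwarzschild rigidity, the glued manifold must be isometric to a piece of a spatial Schwarzschild slice along the foliation. The inner boundary of the Schwarzschild piece, pulled back inside $\bar\Omega$ through the foliation, is a minimal sphere separating $\bar V$ from $\partial\Omega$, again contradicting the standing assumption. The argument for $m_{WY}$ is identical, invoking Theorem \ref{thm1.1}(ii) in place of (i).

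The main obstacle I anticipate is precisely this final rigidity step: propagating equality along the IMCF all the way inside $\bar\Omega$ to extract a genuine minimal sphere, rather than merely a Schwarzschild structure in the attached extension. An alternative that sidesteps this analysis, in the spirit of \cite{ShiTamHorizons}, is to quantify the strict Hawking monotonicity under the no-minimal-sphere assumption to produce a strict inequality $m_{LY}(\partial\Omega) > m_H(\bar V)$, contradicting the hypothesis directly.
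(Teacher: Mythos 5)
Your skeleton is the same as the paper's: argue by contradiction/contraposition, reduce to the case where the isoperimetric region is a minimizing hull, and then play Theorem \ref{thm1.1} off against the hypothesis $m_H(\bar V)\geq m_{LY}(\partial\Omega)$. The paper gets the reduction for free by quoting Lemma \ref{Lem3} (Lemma 3.6 of \cite{ShiTamHorizons}), a purely Riemannian dichotomy for $(\bar\Omega,\bar g)$ requiring only mean convexity and positive Gauss curvature of the boundary: either $\bar\Omega$ contains an outward minimizing minimal surface, or the isoperimetric region is a minimizing hull. Your re-derivation of this dichotomy has a flaw: you classify the ``jump'' components of the strictly minimizing hull as spheres by invoking nonnegative scalar curvature of the metric obtained ``after the deformation/smoothing procedure,'' but that metric is the conformal change $\hat{\mathbf{g}}_{\delta}=u_{\delta}^4\bar{\mathbf{g}}_{\delta}$ of the glued manifold, and minimality is not conformally invariant --- a surface minimal in $\bar g$ is not minimal in $\hat{\mathbf{g}}_{\delta}$, so the Schoen--Yau/Gauss--Bonnet genus argument does not apply there. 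For the dichotomy itself you do not need sphericity at all; the separating-sphere refinement in the conclusion comes from simple connectedness and outermost minimal surface theory, not from this step.

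The more substantive gap is the one you flag yourself: after applying Theorem \ref{thm1.1} you are left with the equality $m_H(\bar V)=m_{LY}(\partial\Omega)$, and your proposed way out --- saturating Geroch monotonicity along weak IMCF and invoking Schwarzschild rigidity to manufacture a minimal sphere inside $\bar\Omega$ --- is never executed and would be delicate: the equality would have to be propagated through the Bartnik--Shi--Tam extension, the conformal factor $u_{\delta}$, and the $\theta\to 0$ limit in the proof of Theorem \ref{thm1.1}, none of which is set up to yield a rigidity statement. The paper does none of this: its proof simply reads the two inequalities $m_{LY}(\partial\Omega)\geq m_H(\bar V)$ and $m_H(\bar V)\geq m_{LY}(\partial\Omega)$ as incompatible with $\bar V$ bounding a minimizing hull and concludes via Lemma \ref{Lem3}. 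You are right that, taken literally, these only force equality, so you have put your finger on a point the paper passes over; but as a proof your proposal is incomplete at exactly this step, and the heavy rigidity machinery you propose is neither carried out nor the route the paper takes.
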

An \textit{isoperimetric} surface $V\subset \Omega$ is a $C^2$ surface in $(\Omega,g)$ whose area is no more than any other $C^2$ surface enclosing the same volume. A wider class is that of \textit{locally isoperimetric} surfaces. These minimize area given enclosed volume amongst local competitors, and are also sometimes referred to as \textit{stable volume preserving CMC}. \\ \indent
Using the results above, we also obtain the following. 
\begin{prop}\label{prop1.6}
Let $\Omega$ be admissible. If the Gauss curvature of $\partial\Omega$ is positive and for a Jang graph $\bar{\Omega}$ of $\Omega$ we have either $m_{ST}(\bar{\Omega})> m_{LY}(\partial \Omega)$ or $m_{ST}(\bar{\Omega})\geq \frac{1}{4}\text{diam}(\partial \Omega)$, then there is a seperating outward minimizing minimal sphere in $\bar{\Omega}$. The same statement holds for $m_{WY}(\partial \Omega)$ if there is an admissible function $\tau$ on $\partial \Omega$. 
\end{prop}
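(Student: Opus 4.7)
The plan is to prove Proposition~\ref{prop1.6} by contrapositive, transplanting Shi-Tam's minimal-surface detection argument from~\cite{ShiTamHorizons} to the Jang graph $\bar\Omega$ with Theorem~\ref{thm1.1} playing the role of the Brown-York comparison used in the Riemannian case. I would assume that $\bar\Omega$ contains no separating outward minimizing minimal sphere and aim to contradict both hypotheses by establishing $m_{ST}(\bar\Omega) \leq m_{LY}(\partial\Omega)$ and $m_{ST}(\bar\Omega) < \tfrac{1}{4}\text{diam}(\partial\Omega)$.

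First I would verify that the weak inverse mean curvature flow of Huisken-Ilmanen~\cite{HuiskenIlmanen} can be initiated inside $\bar\Omega$: admissibility of $\Omega$ equips $\bar\Omega$ with the nonnegative scalar curvature structure required (modulo the divergence term involving the $1$-form $X$ of Definition~\ref{DefX}), and the untrappedness of $\partial\Omega$ descends to mean-convexity of $\partial\bar\Omega$. Under the no-minimal-sphere assumption the flow is not obstructed and produces a family $\{\bar E_t\}$ of connected $C^{1,1}$ minimizing hulls with non-decreasing Hawking mass; by Definition~\ref{massshitam}, this family realizes $m_{ST}(\bar\Omega)$ as a supremum. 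Applying Theorem~\ref{thm1.1}(i) to each $\bar E_t$ gives $m_{LY}(\partial\Omega) \geq m_H(\partial \bar E_t)$, and passing to the supremum produces $m_{LY}(\partial\Omega) \geq m_{ST}(\bar\Omega)$, contradicting the first hypothesis. For the second hypothesis, I would couple this with a Shi-Tam style geometric estimate bounding $m_H(\partial\bar E_t) < \tfrac{1}{4}\text{diam}(\partial\Omega)$ in the absence of minimal surfaces (exploiting that each $\partial\bar E_t$ is a topological sphere whose intrinsic diameter is controlled by $\text{diam}(\partial\Omega)$), giving $m_{ST}(\bar\Omega) < \tfrac{1}{4}\text{diam}(\partial\Omega)$ and hence the desired contradiction. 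The Wang-Yau case follows identically from Theorem~\ref{thm1.1}(ii) under the admissible time function $\tau$.

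The main obstacle I anticipate is not the logical scaffolding above but the verification that Huisken-Ilmanen's weak IMCF machinery and its Hawking monotonicity formula transplant cleanly to $\bar\Omega$, which may carry blow-up cylinders over apparent horizons of $(\Omega, g, k)$ and whose scalar curvature is only conditionally nonnegative. The smoothening procedure of Alaee-Khuri-Yau~\cite{AlaeeKhuriYau} should control these issues, but checking that $m_{ST}(\bar\Omega)$ is attained as a genuine supremum along the flow and that the resulting comparison with $m_{LY}$ or $m_{WY}$ survives the smoothing is where care is most needed. The diameter inequality is a secondary technicality, provable by adapting the corresponding Riemannian estimate of~\cite{ShiTamHorizons}.
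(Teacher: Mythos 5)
Your contrapositive skeleton matches the paper's, but two of your key steps do not work as stated. First, the claim that a family of weak IMCF leaves $\{\bar E_t\}$ inside $\bar\Omega$ ``realizes $m_{ST}(\bar\Omega)$ as a supremum'' is unjustified: by Definition \ref{massshitam}, $m_{ST}(\bar\Omega)$ is a supremum over \emph{all} pairs $\bar\Omega_1\subsetneq\bar\Omega_2\subset\bar\Omega$ and all connected minimizing hulls $\bar E$ \emph{of the subdomain} $\bar\Omega_2$ contained in $\bar\Omega_1$, and such an $\bar E$ need not be a minimizing hull of $\bar\Omega$ nor arise as an IMCF leaf. This is exactly where the no-minimal-surface hypothesis is actually used in the argument of Theorem 3.2 of \cite{ShiTamHorizons}, which the paper invokes: for an arbitrary competitor $\bar E$ one passes to its strictly minimizing hull $\bar E'$ in $\bar\Omega$, applies Theorem \ref{thm1.1} to $\bar E'$ (which \emph{is} a minimizing hull of $\bar\Omega$ with $C^{1,1}$ boundary), and then compares $m_H(\partial\bar E')$ with $m_H(\partial\bar E)$. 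Absence of minimal surfaces rules out the ``jump'' in which $\partial\bar E'$ detaches from $\partial\bar E$, and in the case $\partial\bar E'\not\subset\bar\Omega_2$ the Meeks--Yau area estimate for the minimal portion of $\partial\bar E'$ produces precisely the constant $\alpha_{\bar\Omega_1,\bar\Omega_2}$ appearing in Definition \ref{massshitam}. None of this machinery appears in your outline. Relatedly, you cannot run IMCF with Geroch monotonicity \emph{inside} $\bar\Omega$: the Jang metric satisfies only $\bar R\geq 2|X|^2-2\operatorname{div}X$, and the whole point of Theorem \ref{thm1.1} (via the gluing, Miao-type smoothing, and conformal deformation of Lemma \ref{lem1} and Proposition \ref{prop2}) is that the monotonicity is exploited in the deformed asymptotically flat extension, not in $\bar\Omega$ itself. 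You flag this difficulty at the end but do not resolve it; the resolution is to use Theorem \ref{thm1.1} as a black box applied to $\bar E'$ rather than to re-run the flow.

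Second, your route to the diameter bound would fail. There is no control of the intrinsic diameter of surfaces in the Jang graph by $\mathrm{diam}(\partial\Omega)$ --- the graph can stretch distances arbitrarily, especially near blow-up cylinders --- so an estimate of the form $m_H(\partial\bar E_t)<\tfrac14\mathrm{diam}(\partial\Omega)$ obtained from the geometry of the leaves is not available. The paper instead deduces the diameter alternative directly from the first inequality: once $m_{ST}(\bar\Omega)\leq m_{LY}(\partial\Omega)$ is established, one bounds $m_{LY}(\partial\Omega)<\frac{1}{8\pi}\int_{\partial\Omega}H_0\leq\frac{R}{8\pi}\int_{\partial\Omega}K_{\partial\Omega}=\frac{R}{2}\leq\frac14\mathrm{diam}(\partial\Omega)$ using the isometric embedding of the positively curved $\partial\Omega$ into $\mathbb{R}^3$, the Minkowski formula, and Gauss--Bonnet, as in display \eqref{diameter}. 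So the diameter statement is a corollary of the mass comparison plus an upper bound on $m_{LY}$ purely in terms of boundary data, not a separate estimate on the flow.
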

Lastly, we note that our results along with that of \cite{ShiTamHorizons} will lead to comparison theorems involving the spacetime Bartnik mass and the Liu-Yau or Wang-Yau mass, though we have not pursued this further. \\ \indent 
Finally, in the non-existence direction, we show the following. 
\begin{prop}\label{prop1.7}
	Let $\Omega$ be admissible and $\bar{\Omega}$. Suppose that the sectional curvatures of all Jang graphs $\bar{\Omega}$ of $\Omega$ are bounded above by some constant $C^2$, $C>0$. If the boundary has positive Gauss curvature and $m_{LY}(\partial\Omega)< \frac{1}{2C}$ then there is no MOTS in $\Omega$ and $\bar{\Omega}$ is diffeomorphic to a ball in $\mathbb{R}^3$. If $m_{WY}(\partial\Omega)< \frac{1}{2C}$, the same statement holds for $m_{WY}(\partial \Omega)$ provided there is an admissible $\tau$ on $\partial \Omega$.
\end{prop}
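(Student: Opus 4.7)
I would argue by contradiction, ruling out a MOTS first and then reading off the topology. Suppose $\Omega$ contains a MOTS. Since $\partial\Omega$ is outer-untrapped, by Andersson--Metzger/Eichmair there is an outermost MOTS $S\subset\Omega$, and by MOTS stability in dimension three $S$ is a topological $2$-sphere. The Schoen--Yau blowup Jang construction applied to $S$ yields a Jang graph $\bar\Omega$ whose inner boundary $\bar S$ is isometric to $S$ and is a closed outward-minimizing minimal surface in $\bar\Omega$ (coming from the cylindrical asymptote, as used in \cite{AlaeeKhuriYau}). Corollary~\ref{cor1}(i) then gives
\[
m_{LY}(\partial\Omega)\;\ge\;\sqrt{\tfrac{|S|}{16\pi}}.
\]

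The next step is to extract a lower bound on $|S|$ from the sectional-curvature assumption via the Gauss equation. For the minimal $2$-sphere $\bar S\subset\bar\Omega$, with ambient sectional curvature bounded above by $C^2$, the Gauss equation gives $K_{\bar S}=\mathrm{sec}^{\bar\Omega}(T\bar S)+\det(II_{\bar S})\le C^2$, since minimality forces the principal curvatures of $\bar S$ to be $\pm\lambda$ and hence $\det(II_{\bar S})=-\lambda^2\le 0$. Gauss--Bonnet on the $2$-sphere $\bar S$ yields
\[
8\pi\;=\;\int_{\bar S}K_{\bar S}\;\le\;C^2\,|\bar S|\;=\;C^2\,|S|,
\]
so $|S|\ge 8\pi/C^2$. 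Combined with the previous inequality this gives $m_{LY}(\partial\Omega)\ge 1/(\sqrt{2}\,C)>1/(2C)$, contradicting the hypothesis. Hence no MOTS exists in $\Omega$ (and consequently no outer trapped surface either, since those would imply a MOTS between them and $\partial\Omega$).

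With MOTS ruled out, Schoen--Yau's theorem yields a smooth bounded solution of the Jang equation on $\Omega$, so any Jang graph $\bar\Omega$ is the smooth graph of a function over $\Omega$, hence diffeomorphic to $\Omega$. Since $\Omega$ is simply connected with smooth connected boundary and $\partial\Omega\cong S^2$ (by Gauss--Bonnet, using positive Gauss curvature), gluing a $3$-ball to $\partial\Omega$ yields a closed simply connected $3$-manifold, which is $S^3$ by the Poincar\'e theorem; therefore $\Omega$ and $\bar\Omega$ are each diffeomorphic to a ball in $\mathbb{R}^3$. The Wang--Yau case is identical, using Corollary~\ref{cor1}(ii) and the admissible time function $\tau$ in place of the Gauss-curvature hypothesis.

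\textbf{Main obstacle.} The most delicate point is confirming that the inner boundary of the blowup Jang graph over the outermost MOTS is genuinely outward minimizing in the precise sense required by Corollary~\ref{cor1}; this is morally built into the cylindrical blowup picture (outward deformations pick up cylinder area), but must be matched to the setup of \cite{AlaeeKhuriYau}. A secondary but related point is that the sectional-curvature bound $\mathrm{sec}^{\bar\Omega}\le C^2$ must hold on the blowup Jang graph up to and including $\bar S$, so that the Gauss-equation estimate is valid there; this is covered by the hypothesis that the bound holds on \emph{all} Jang graphs of $\Omega$.
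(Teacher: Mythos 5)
Your core estimate (Gauss equation plus minimality to get $K_{\bar S}\le C^2$, then Gauss--Bonnet and the quasi-local Penrose inequality of Corollary~\ref{cor1}) is exactly the paper's, but your overall architecture is different and it opens a genuine gap that the paper's architecture is designed to avoid. You run the contradiction off the assumption that a MOTS exists, pass to the blow-up Jang graph, and apply Corollary~\ref{cor1} to the cylindrical cross-section over the outermost MOTS. But the first alternative in Corollary~\ref{cor1} carries the explicit \emph{hypothesis} that the projection of the MOTS in the blow-up Jang graph is outward minimizing; Proposition~\ref{prop1.7} does not assume this, and it is not automatic from the cylindrical asymptotics (the cross-section ``at infinity'' is not even a compact surface sitting inside $\bar\Omega$ in the sense required by the minimizing-hull machinery). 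You flag this yourself as the ``main obstacle,'' and it is not merely a technical matching issue --- it is the missing step. The paper instead assumes $\bar\Omega$ is \emph{not} diffeomorphic to a ball, invokes Meeks--Simon--Yau to produce an honest outward minimizing minimal sphere $\bar S$ inside the Jang graph, and applies the second alternative of Corollary~\ref{cor1} to that surface. This both supplies the needed minimizing property for free and delivers the topology without appeal to the Poincar\'e conjecture: once no minimal surface can exist, Meeks--Simon--Yau gives the ball, hence no blow-up, hence no MOTS. Your final topological step (cap off $\partial\Omega$ and quote Poincar\'e) is valid but far heavier than necessary.

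A smaller point: Gauss--Bonnet on the minimal sphere gives $\int_{\bar S}K_{\bar S}=2\pi\chi(S^2)=4\pi$, not $8\pi$. With the correct constant you get $|\bar S|\ge 4\pi/C^2$ and hence $m_{LY}(\partial\Omega)\ge\sqrt{|\bar S|/16\pi}\ge 1/(2C)$, which is precisely the threshold in the statement; your $8\pi$ would give the stronger (but unjustified) bound $1/(\sqrt2\,C)$. The contradiction survives the correction, but the factor should be fixed.
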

The Riemannian version of this statement for asymptotically flat Riemannian manifold with nonnegative scalar curvature is due to Corvino \cite{Corvino}, and a quasi-local version of it using Brown-York mass is due to Alaee-Cabrera Pacheco-McCormick \cite{AlaeeCabreraPachecoMcCormick}. \\ \indent

\textbf{Acknowledgements}. The authors would like to thank Professors Yuguang Shi and Luen-Fai Tam for clarifying discussions. A. Alaee acknowledges the support of an AMS Simons Travel Grant, the Gordon and Betty Moore Foundation, and the John Templeton Foundation. M. Lesourd acknowledge the support of the Gordon and Betty Moore Foundation and the John Templeton Foundation. S.-T. Yau acknowledges the support of NSF Grant DMS-1607871.

\section{Definitions and Preliminaries}
Let $(\Omega,g)$ be a compact Riemannian manifold. The Hawking mass $m_{H}(\Sigma)$ of a surface $\Sigma\subset\Omega$ is defined as follows.
\begin{equation}
m_{H}(\Sigma)=\sqrt{\frac{|\Sigma|}{16\pi}}\left(1-\frac{1}{16\pi}\int_{\Sigma}H^2dA_{\sigma}\right),
\end{equation} 
where $\sigma$ is the induced metric on $\Sigma$ and $H$ is the mean curvature of $\Sigma$ with respect to outward normal. Let $(\Omega,g)$ be a compact Riemannian manifold with boundary $\partial\Omega$ that has positive Gauss curvature and spacelike mean curvature vector $\vec{H}=H\nu-(\text{Tr}_{\Sigma}k) n$, $\nu$ and $n$ are unit spacelike and future-directed timelike normal to $\Sigma$, respectively. Then the Liu-Yau mass \cite{LiuYau} is defined as 
\begin{equation}
m_{LY}(\partial\Omega)=\frac{1}{8\pi}\int_{\partial\Omega}\left(H_0-|\vec{H}|\right)dA_{\sigma},
\end{equation} where $H_0$ is the mean curvature of isometric embedding of $\partial\Omega$ in $\mathbb{R}^3\subset\mathbb{R}^{3,1}$ and $\sigma$ is the induced metric on $\partial\Omega$. 

In \cite{WangYauPMT1}, Wang-Yau defined a quasi-local energy for a spacelike 2-surface $\Sigma$ embedded in a spacetime $N^{3,1}$. Let $\Sigma\hookrightarrow N^{3,1}$ be a spacelike 2-surface and suppose there is an isometric embedding of $\iota:\Sigma\hookrightarrow\mathbb{R}^{3,1}$ with mean curvature vector $\vec{H}_0$ and a time function $\tau=-\left<\iota(\Sigma),\vec{T}_0\right>$, where $\vec{T}_0$ is the
designated future timelike unit Killing field on $\mathbb{R}^{3,1}$. Then the 4-tuple $(\Sigma,\sigma,|\vec{H}|,\alpha_{\bar{e}_3})$ is called Wang-Yau data set, where $\sigma$ is the induced metric on $\Sigma$, $\vec{H}$ is the mean curvature vector which is spacelike, and $\alpha$ is the connection one-form of the normal bundle of $\Sigma$ and defined as
\begin{equation}
\alpha_{\bar{e}_3}(\cdot)=\langle{}^N\nabla_{(\cdot)}\bar{e}_3,\bar{e}_4\rangle.
\end{equation}Here $\{\bar{e}_3,\bar{e}_4\}$ is the unique orthonormal frame for the normal bundle of $\Sigma$ in $N^{3,1}$ such that $\bar{e}_3$ is spacelike, $\bar{e}_4$ is future-directed timelike, and
\begin{equation}\label{con1}
\langle\vec{H},\bar{e}_3\rangle >0,\quad\quad\quad \langle\vec{H},\bar{e}_4\rangle
=\frac{-\Delta\tau}{\sqrt{1+|\nabla\tau|^2}}.
\end{equation}

Assuming $\vec{H}_0$ is spacelike, then the Wang-Yau quasi-local energy is defined to be
\begin{equation}\label{wangyauenergy}
E_{WY}(\Sigma,\iota,\tau)
=\frac{1}{8\pi}\int_{\Sigma}\left(\mathfrak{H}_0-\mathfrak{H}\right)dA_\sigma,
\end{equation}
where the \textit{generalized mean curvature} is
\begin{equation}\label{h0}
\mathfrak{H}=\sqrt{1+|\nabla\tau|^2}\langle\vec{H},\bar{e}_3\rangle
-\alpha_{\bar{e}_3}(\nabla\tau),\qquad \mathfrak{H}_0=\sqrt{1+|\nabla\tau|^2}\hat{H}_0,
\end{equation}where $\hat{H}_0$ is the mean curvature of $\hat{\Sigma}\subset\mathbb{R}^3$ which is the orthogonal project of $\iota(\Sigma)$ with respect to $\vec{T}_0$.
This definition depends on observer $(\iota,\tau)$ and the Wang-Yau quasi-local energy is non-negative for \textit{admissible} observers \cite{WangYauPMT1}. An observer is \textit{admissible} or the time function $\tau$ is \textit{admissible} if the convexity condition \begin{equation}\label{convexitycondition}
\left(1+|\nabla\tau|^2\right)K_{\hat{\sigma}}=K_{\Sigma}
+\left(1+|\nabla\tau|^2\right)^{-1}\text{det}(\nabla^2\tau)>0,
\end{equation}
where $K_{\hat{\sigma}}$ is the Gauss curvature of metric $\hat{\sigma}=\sigma+d\tau^2$ on $\hat{\Sigma}$, is satisfied, $\Sigma$ arises as the untrapped boundary of a spacelike hypersurface $(\Omega,g,k)\hookrightarrow N^{3,1}$, and the generalized mean curvature is positive $\mathfrak{H}(e_{3}',e_{4}')>0$ for the normal bundle frame $\{e_{3}',e_{4}'\}$ determined by the solution of Jang's equation, see Definition 5.1 of \cite{WangYauPMT1}. In analogy with special relativity, the Wang-Yau mass is defined as the
infimum of energy over all admissible observers $(\iota,\tau)$, that is
\begin{equation}
m_{WY}(\Sigma)=\inf_{(\iota,\tau)}E_{WY}(\Sigma,\iota,\tau).
\end{equation}
We need following definition of minimizing hulls \cite{HuiskenIlmanen} in the next section. 
\begin{defn}\label{Defminhull}
	Let $E$ be a set in $\Omega$ with locally finite perimeter. $E$ is said to be a minimizing hull of $\Omega$ if $|\partial^*E\cap W|\leq|\partial^*F\cap W|$ for any set $F$ with locally finite perimeter such that $F\supset E$ and $F\,\backslash\, E\subsetneq\Omega$ for any set $W\subset\Omega$ containing $F\,\backslash\, E$.  Here $\partial^*E$ and $\partial^*F$ are the reduced boundaries of $E$ and $F$ respectively. $E$ is said to be strictly minimizing hull if equality (for all $W$) implies $E\cap W=F\cap W$ a.e.
\end{defn}
Note that an \textit{outward minimizing minimal surface} $\Sigma$ is a boundary of minimizing hull with zero mean curvature $H_{\Sigma}=0$. This surface represents black hole apparent horizon in Riemannian setting, which the second fundamental form of initial data set vanishes. However, for general initial data $(\Omega,g,k)$, a black hole apparent horizon is represented by a marginally outer trapped surface (MOTS) $\Sigma$ in $\Omega$ and defined as a hypersurface embedded in $\Omega$ with $H_{\Sigma}+\text{Tr}_\Sigma(k)=0$. \\ \indent
We now come to the definition of the Shi-Tam mass $m_{ST}(\Omega)$, originally described in \cite{ShiTamHorizons}.
\begin{defn}\label{massshitam}
Let  $\Omega_{1}\subsetneq \Omega_2\subset \Omega$ such that $\Omega_1$ and $\Omega_2$ have smooth boundaries. Let $\mathcal{F}_{\Omega_2}$ be the family of connected minimizing hulls, with $C^2$ boundary, of $\Omega_2$. Define 
\begin{equation}
m(\Omega_1;\Omega_2)=\sup_{E\in \mathcal{F}_{\Omega_2}, E\subset\Omega_1}m_{H}(E)
\end{equation}Then the Shi-Tam quasi-local mass is defined 
\begin{equation}
m_{ST}(\Omega)= \sup_{\Omega_1,\Omega_2}\alpha_{\Omega_1,\Omega_2}m(\Omega_1;\Omega_2),
\end{equation} where
\begin{equation}
\alpha_{\Omega_1;\Omega_2}^2=\min\left\{1,\frac{CK^{-2}\int_0^r\tau^{-1}\sin(K\tau)^2d\tau}{|\partial\Omega_1|}\right\},
\end{equation} and $K>0$ is an upper bound for sectional curvature of $\Omega_2$ and $C$ is an absolute positive constant.
\end{defn}Note that we remove the precompactness property of minimizing hull from above definition to include structure of blow up Jang graph. The constant $\alpha_{\Omega_1,\Omega_2}$ is related to Meeks and Yau \cite{MeeksYau} estimate for area of the minimal surface part of any strictly minimizing hull $E'$ of $E$ with respect to $\Omega$ in above definition. Shi and Tam \cite[Theorem 2.4]{ShiTamHorizons} show that $m_{ST}(\Omega)$ has various pleasant properties. If $\Omega$ has non-negative scalar curvature $R_g\geq 0$, and $\partial \Omega$ (which is smooth and connected) has positive mean curvature $H_{\partial \Omega}>0$ and Gauss curvature $K_{\partial \Omega}>0$, then $m_{ST}(\Omega)\geq 0$ and moreover equality is achieved if and only if $\Omega$ is a domain in flat $\mathbb{R}^3$. \\ \indent 
We now consider definitions for initial data sets $(\Omega,g,k)$ following the classic arguments of Schoen and Yau \cite{SchoenYauII}. Recall that in their proof of the spacetime positive mass theorem, they are faced with the issue that an initial data set satisfying the dominant energy condition $\mu\geq|J|_g$ that may not have a non-negative scalar curvature. They overcome this by considering deformations of the initial data $g\rightarrow \bar{g}=g+df^2$, where $f:\Omega\rightarrow\mathbb{R}$ is a solution of the Jang equation and $\bar{g}$ is the metric induced on the Jang graph of the initial data set. Upon proving existence of solution of the Jang's equation, they are able to conformally deform the Jang graph (without significantly affecting mass) to another initial data set with zero scalar curvature, which in turn eventually permits for an application of the Riemannian positive mass theorem \cite{SchoenYauI}. In the current setting we consider the following Jang's equation with Dirichlet boundary condition:
\begin{equation}\label{Jang}
\begin{cases}
\left(g^{ij}-\frac{f^if^{j}}{1+|\nabla f|^2_g}\right)\left(\frac{\nabla_{ij}f}{\sqrt{1+|\nabla f|^2_g}}-k_{ij}\right)=0 & \text{in $\Omega$}\\
f=\tau & \text{on $\partial\Omega$}
\end{cases},
\end{equation}
where $f^i=g^{ij}f_j$ and the covariant derivative $\nabla$ is with respect to $g$. Assume the boundary is untrapped $H_{\partial\Omega}> |\text{Tr}_{\partial\Omega}k|$. If there is no MOTS in $\Omega$, the Dirichlet problem \eqref{Jang} has a unique smooth solution by Schoen and Yau \cite{SchoenYauIII}, which we call it the Jang graph $\bar{\Omega}$ of $\Omega$. Moreover, if there is a MOTS in $\Omega$, there is no uniqueness for the Dirichlet problem \eqref{Jang} but there exist a smooth solution by Andersson and Metzger \cite{AnderssonMetzger} which blow up in the form of a cylinder over MOTS, with $f\to\infty$ ($-\infty$) at MOTS depending on whether it
is a future (or past) MOTS. 

Following \cite{LiuYau,WangYauPMT1}, in the case of the Wang-Yau mass, we set $f=\tau$ on $\partial\Omega$ which $\tau$ is admissible time function and for the Liu-Yau mass, we set $f=\tau=0$ on $\partial\Omega$ which means the metrics $\bar{g}$ and $g$ are the same on the boundary.  Moreover the equation implies that the scalar curvature of the Jang metric is weakly nonnegative, that is
\begin{equation}\label{Jangscalar}
\bar{R}=2\left(\mu-J(w)\right)+|h-k|^2_{\bar{g}}
+2|X|_{\bar{g}}^2-2\text{div}_{\bar{g}}X\geq 2|X|_{\bar{g}}-2\text{div}_{\bar{g}}X,
\end{equation}
where $h$ is second fundamental form of the graph $t=f(x)$ in the product manifold $(\Omega\times\mathbb{R},g+dt^2)$, and $w$, $X$ are 1-forms given by
\begin{equation}\label{def-h-w-X}
h_{ij}=\frac{\nabla_{ij}f}{\sqrt{1+|\nabla f|^2_g}},\qquad w_i=\frac{f_i}{\sqrt{1+|\nabla f|^2_g}},\:
\end{equation} and 
\begin{equation}
 X_i=\frac{f^j}{\sqrt{1+|\nabla f|^2_g}}\left(h_{ij}-k_{ij}\right).
\end{equation}

We denote the Jang graph by $(\bar{\Omega},\overline{g},\bar{k}\equiv h-k,X)$. Let $\left(\operatorname{div}_{\bar{{g}}}X\right)_+$ be positive part of $\operatorname{div}_{\bar{{g}}}X$, then we have the following definition for admissible one-form $X$.
\begin{defn}\label{DefX}
	Let $(\bar{\Omega},\overline{g},\bar{k},X)$	be a Jang graph of an initial data set $(\Omega, g,k)$. The one-form $X$ is admissible if $X(\bar{\nu})>0$ on $\partial\bar{\Omega}$ and there exist constants $d_i>0$ such that 
	\begin{equation}
	\norm{\left(\operatorname{div}_{\bar{{g}}}X\right)_+}_{L^1(\bar{\Omega})}<d_1,\quad \norm{\left(\operatorname{div}_{\bar{{g}}}X\right)_+}_{L^{3/2}(\bar{\Omega})}<d_2,
	\end{equation}
	\begin{equation}
	 \norm{\left(\operatorname{div}_{\bar{{g}}}X\right)_+}_{L^{6/5}(\bar{\Omega})}<d_3,
	\end{equation}and for sufficiently small $\delta>0$ they satisfy equations \eqref{in3} and \eqref{A1}.
\end{defn}
With these definitions, we can also define a different mass $m^*(\Omega,\bar{\Omega})$, where $\bar{\Omega}$ stands for a Jang graph over $\Omega$. 
\begin{defn}\label{newmass}Let $\bar{\Omega}_1\subsetneq\bar{\Omega}_2\subset\bar{\Omega}$ with smooth boundaries, $C$ be an absolute positive constant, and $K$ be an upper bound of the sectional curvature of $\bar{\Omega}_2$. Define 
\begin{equation}
m^*(\Omega;\bar{\Omega})= \sup_{\bar{\Omega}_1,\bar{\Omega}_2} \alpha^*_{\bar{\Omega}_1,\bar{\Omega}_2} m^*(\bar{\Omega}_1;\bar{\Omega}_2),
\end{equation} 
where
\begin{equation}
{\alpha^*}^2_{\bar{\Omega}_1,\bar{\Omega}_2}=\emph{min} \left\{ 1,\frac{CK^{-2}\int_0^r\tau^{-1}\sin(K\tau)^2d\tau}{{|\partial \Omega_1|}{\beta_{\Omega,\bar{\Omega}}} } \right\},
\end{equation} and 
\begin{equation}
 \beta_{\Omega,\bar{\Omega}}= \frac{\textnormal{Rad}(\bar{\Omega})}{\textnormal{Rad}(\Omega)},\qquad m^*(\bar{\Omega}_1;\bar{\Omega}_2)= \sup_{\bar{E}\in F^*_{\bar{\Omega}_2},\bar{E}\subset \bar{\Omega}_1} m_H(\partial \bar{E}),
\end{equation}
where $F^*_{\bar{\Omega}_2}$ is the family of connected minimizing hulls, with $C^2$ boundary, of $\bar{\Omega}_2$ such that for any $\bar{E}\in F^*_{\bar{\Omega}_2}$ we have $\partial \bar{E}\cap \partial \bar{V}\neq \emptyset$ for some connected strictly minimizing hull $\bar{V}$ of $\bar{\Omega}$ with connected boundary. 

\end{defn}
By $\text{Rad}(\Omega)$ we mean the definition of \cite{SchoenYauIII}. 
\begin{defn} 
Let $\Omega$ be a domain with boundary $\partial \Omega$ and $\Gamma$ be a simple closed curve in $\Omega$ that bounds a disc. Let $N_r(\Gamma)$ be set of all points within a $r$ radius of $\Gamma$. Define $\text{Rad}(\Omega,\Gamma)$ to be
\begin{equation}
\sup\{ r : \text{dist}(\partial \Omega, \Gamma)>r \: \text{and $\Gamma$ does not bound a disc in $N_r(\Gamma)$}\}
\end{equation}Then $\text{Rad}(\Omega)=\sup_\Gamma \text{Rad}(\Omega,\Gamma)$.
\end{defn} 
In particular, $\text{Rad}(\Omega)$ may be described as the radius of the
largest torus that can be embedded in $\Omega$. For example for a ball with radius $R$ in $\mathbb{R}^3$, $\text{Rad}(\Omega)=R/2$ and for a cylinder $\mathbb{S}^2_{R}\times(-L,L)$, $\text{Rad}(\Omega)=\min\{\pi R/2,L\}$. Note that this definition is constructed such that the radius of a blow up Jang graph $\bar{\Omega}$ for a bounded domain $\Omega$ stays finite. Moreover, this new quantity $m^*(\Omega;\bar{\Omega})$ is non-negative, and $m^*(\bar{\Omega}_1;\bar{\Omega}_2)\geq m(\bar{\Omega}_1;\bar{\Omega}_2)$ since the latter involves more competitors. If the Jang graph does not blow up, since $\text{Rad}(\Omega)\leq \text{Rad}(\bar{\Omega})$, the constant satisfies $\beta_{\Omega,\bar{\Omega}}\geq 1$. When blow up occurs, there is no known general relation between $\text{Rad}(\Omega)$ and $\text{Rad}(\bar{\Omega})$.

\section{Proof of Main Results}
We start with an admissible domain $\Omega$ and consider solutions to Jang's equation with prescribed boundary data $\bar{g}=\hat{\sigma}$ on $\partial \bar{\Omega}$. Solutions to this Dirichlet boundary problem exist by Schoen-Yau \cite{SchoenYauII} and each defines a graph $\bar{\Omega}$ over $\Omega$ with the properties aforementioned. By admissible condition $X(\nu)>0$ on $\partial\bar{\Omega}$, we construct a Bartnik-Shi-Tam extension \cite{AlaeeKhuriYau,ShiTam2002} to \(\partial \bar{\Omega}\), denoted $(M_+,g_+,k_+=0,X_+=0)$, which an asymptotically flat Riemannian manifold with zero scalar curvature and boundary $\partial M_+=\partial\bar{\Omega}$ has mean curvature $\bar{H}>0$ and induced metric $\hat{\sigma}=g_+|_{\partial M_+}$. By Shi-Tam \cite{ShiTam2002}, we have the following inequality for the ADM mass of extension.
\begin{equation}\label{shitamin}
\frac{1}{8\pi}\int_{M_+}\left(\hat{H}_0-\bar{H}\right)dA_{\hat{\sigma}}\geq m_{ADM}(g_+),
\end{equation}where $\hat{H}_0$ is the mean curvature of isometric embedding of $(\partial\bar{\Omega},\hat{\sigma})$ in Euclidean space $\mathbb{R}^3$. In contrast to our setting, the extension in \cite{AlaeeKhuriYau,LiuYau,WangYauPMT1} has mean curvature $\bar{H}-X(\bar{\nu})$ for boundary $\partial\bar{\Omega}$. Next, we attach the Jang graph $(\bar{\Omega},\bar{g},\bar{k},X)$ to this Bartnik-Shi-Tam extension and denote it by
\begin{equation}
(\bar{\mathbf{M}},\bar{\mathbf{g}},\bar{\mathbf{k}},\mathbf{X})=(\overline{\Omega}\cup M_+,\overline{g}\cup g_+,\bar{k}\cup k_+,X\cup X_+),
\end{equation} which will in general have corner along $\partial\bar{\Omega}$. Since the mean curvature is the same along boundary $\partial\bar{\Omega}$, we modify general gluing developed by Alaee-Khuri-Yau \cite{AlaeeKhuriYau} for the Jang graph and have the following result.
\begin{lem}\label{lem1}
	There exists a smooth deformation $(\bar{\mathbf{M}},\bar{\mathbf{g}}_{\delta},\bar{\mathbf{k}}_{\delta},
	\mathbf{X}_{\delta})$ which differs from the original data $(\bar{\mathbf{M}},\bar{\mathbf{g}},\bar{\mathbf{k}},\mathbf{X})$ only on a $\delta$-tubular neighborhood of $\partial\bar{\Omega}$, that is $\mathcal{O}_{\delta}=\left[-\frac{\delta}{2},\frac{\delta}{2}\right]\times \partial\bar{\Omega}$, and satisfies
	\begin{equation}\label{6yj}
	\left(\bar{\mathbf{R}}_{\delta}-2|\mathbf{X}_{\delta}|_{\bar{\mathbf{g}}_{\delta}}^2
	-|\bar{\mathbf{k}}_{\delta}|^2_{\bar{\mathbf{g}}_{\delta}}\right)(t,x)=O(1),\qquad (t,x)\in\mathcal{O}_{\delta},
	\end{equation}
	\begin{equation}
	\operatorname{div}_{\bar{\mathbf{g}}_{\delta}}\mathbf{X}_{\delta}(t,x)=O(\delta^{-1/3}),\qquad (t,x)\in\mathcal{O}_{\delta}
	\end{equation}
	where  $\bar{\mathbf{R}}_{\delta}$ is the scalar curvature of $\bar{\mathbf{g}}_{\delta}$, and $O(1)$ is a constant depends on the initial data set and independent of $\delta$.
\end{lem}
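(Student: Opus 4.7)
The proof is a corner-smoothing argument in the spirit of Miao \cite{Miao}, extended to the presence of $\bar{\mathbf{k}}$ and $\mathbf{X}$ as in Alaee--Khuri--Yau \cite{AlaeeKhuriYau}. A key simplification compared with \cite{AlaeeKhuriYau} is that the mean curvatures of $\partial\bar\Omega$ from the two sides already agree, because the Bartnik--Shi--Tam extension produced above carries $\bar H$ rather than $\bar H - X(\bar\nu)$.

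First I would introduce Gaussian normal coordinates $(t,x)\in(-\eta,\eta)\times\partial\bar\Omega$ about the corner, with $t<0$ in $\bar\Omega$ and $t>0$ in $M_+$, so that $\bar{\mathbf{g}}=dt^2+\sigma_{ij}(t,x)\,dx^i dx^j$ on each side. In these coordinates the scalar curvature satisfies $\bar{\mathbf R}=R_\sigma-|\mathrm{II}|^2-H^2-2\partial_t H$ on each side, where $R_\sigma$, $\mathrm{II}$ and $H$ are the intrinsic scalar curvature, second fundamental form and mean curvature of the level set $\{t=\text{const}\}$. The induced metric at $t=0$ agrees from both sides and, crucially, the matching of $H$ from both sides forces $\partial_t\sigma(0^-,\cdot)=\partial_t\sigma(0^+,\cdot)$, so $\sigma$ is Lipschitz in $t$ across the corner.

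Inside $\mathcal O_\delta$ I would replace $\sigma$ by its Friedrichs mollification $\sigma_\delta$ in the $t$-variable at scale $\delta$, set $\bar{\mathbf{g}}_\delta=dt^2+\sigma_\delta\,dx^i dx^j$, and glue to $\bar{\mathbf g}$ outside. By the argument of Miao \cite{Miao}, since there is no jump of $\partial_t\sigma$ across $\{t=0\}$ the would-be singular contribution coming from $-2\partial_t H$ is absent, and therefore $\bar{\mathbf R}_\delta=O(1)$ uniformly on $\mathcal O_\delta$. For the tensors I would extend $\bar k$ and $X$ into the extension side by reflection and mollification, obtaining $\bar k_{\mathrm{ext}}$, $X_{\mathrm{ext}}$ of bounded $C^1$ norm on the full collar, and then set
\[
\bar{\mathbf{k}}_\delta=\eta_\delta(t)\,\bar k_{\mathrm{ext}},\qquad \mathbf{X}_\delta=\zeta_\delta(t)\,X_{\mathrm{ext}},
\]
where $\eta_\delta,\zeta_\delta:[-\delta/2,\delta/2]\to[0,1]$ are smooth cutoffs equal to $1$ at $t=-\delta/2$ and $0$ at $t=\delta/2$, glued to the original data for $t\leq-\delta/2$ and to $0$ for $t\geq\delta/2$. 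Since $|\eta_\delta|,|\zeta_\delta|\leq 1$, the norms $|\bar{\mathbf k}_\delta|^2_{\bar{\mathbf g}_\delta}$ and $|\mathbf{X}_\delta|^2_{\bar{\mathbf g}_\delta}$ are $O(1)$, and combining with the Miao estimate gives $\bar{\mathbf R}_\delta-2|\mathbf{X}_\delta|^2-|\bar{\mathbf k}_\delta|^2=O(1)$.

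The main obstacle is the divergence estimate $\operatorname{div}_{\bar{\mathbf g}_\delta}\mathbf{X}_\delta=O(\delta^{-1/3})$. A naive cutoff yields only $O(\delta^{-1})$, since
\[
\operatorname{div}_{\bar{\mathbf g}_\delta}(\zeta_\delta X_{\mathrm{ext}})=\zeta_\delta\operatorname{div}_{\bar{\mathbf g}_\delta}X_{\mathrm{ext}}+\zeta_\delta'\,X^t_{\mathrm{ext}},
\]
and by admissibility $X^t_{\mathrm{ext}}(0,\cdot)=X(\bar\nu)>0$ is bounded away from zero. To reach the exponent $1/3$ I would adapt the two-scale construction in the proof of the analogous gluing in \cite{AlaeeKhuriYau}: introduce an auxiliary scale $\epsilon\ll\delta$, regularize $X_{\mathrm{ext}}$ at scale $\epsilon$, use a cutoff $\zeta_{\delta,\epsilon}$ transitioning on scale $\epsilon$, and subtract from $\mathbf{X}_\delta$ an exact $1$-form $d\varphi_{\delta,\epsilon}$ chosen so that $\Delta\varphi_{\delta,\epsilon}$ cancels the leading part of $\zeta_{\delta,\epsilon}'X_{\mathrm{ext}}^t$. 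The competing errors scale like $\epsilon^{-1}$ from $\zeta'$ versus $\delta\epsilon^{-2}$ from the compensation residual; balancing them yields $\epsilon\sim\delta^{2/3}$ and hence $\operatorname{div}_{\bar{\mathbf g}_\delta}\mathbf{X}_\delta=O(\delta^{-1/3})$. The delicate point I would need to verify is that this compensation preserves the $O(1)$ bounds established in the previous step and remains consistent with the pointwise and $L^p$ controls imposed in Definition \ref{DefX}.
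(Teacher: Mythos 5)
Your treatment of the metric smoothing and of $\bar{\mathbf{k}}_\delta$, $|\mathbf{X}_\delta|^2$ matches the paper: the authors likewise invoke Miao's Section 3 result that matching mean curvatures across the corner yield $\bar{\mathbf{R}}_\delta=O(1)$, and handle the tangential components $\mathbf{X}_{\delta i}$ and $\bar{\mathbf{k}}_\delta$ by the cutoff scheme of Lemma 5.1 of \cite{AlaeeKhuriYau}. The divergence through the estimate is where you depart, and where your argument has a genuine gap. Your proposed corrector $d\varphi_{\delta,\epsilon}$ with $\Delta\varphi_{\delta,\epsilon}\approx \zeta_{\delta,\epsilon}'X^t_{\mathrm{ext}}$ cannot be localized to $\mathcal{O}_\delta$: the source has nonvanishing integral, since $\int_{\mathcal{O}_\delta}\zeta'X^t\,dV\approx-\int_{\partial\bar\Omega}X(\bar\nu)\,dA<0$ by the admissibility condition $X(\bar\nu)>0$, so no solution constant outside the collar exists, and subtracting $d\varphi$ would alter the data away from $\partial\bar\Omega$, contradicting the locality asserted in the lemma. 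Moreover your balancing arithmetic does not produce the claimed exponent: equating $\epsilon^{-1}$ with $\delta\epsilon^{-2}$ gives $\epsilon\sim\delta$ and hence $O(\delta^{-1})$, while taking $\epsilon\sim\delta^{2/3}$ leaves a worst term of order $\delta^{-2/3}$, not $\delta^{-1/3}$.

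The paper does something much more direct: it sets $\mathbf{X}_{\delta t}(t,x)=\varsigma(t)X(\bar\nu)(t,x)$ for a monotone cutoff $\varsigma$ with $\varsigma'\le 0$ and $|\varsigma'|\le\delta^{-1/3}$, computes $\operatorname{div}_{\bar{\mathbf{g}}_\delta}\mathbf{X}_\delta=\varsigma'X(\bar\nu)+O(1)$, and reads off the bound. No corrector appears. Two features of this are essential downstream and are absent from your sketch: the exponent $-1/3$ is chosen precisely so that $\norm{K_{\delta-}}_{L^{3/2}(\mathcal{O}_\delta)}\lesssim\delta^{-1/3}\cdot\delta^{2/3}\to 0$ in Proposition \ref{prop2}, and the sign $\varsigma'\le 0$ together with $X(\bar\nu)>0$ ensures the large term in $\operatorname{div}_{\bar{\mathbf{g}}_\delta}\mathbf{X}_\delta$ is negative, so the positive part $\left(\operatorname{div}_{\bar{\mathbf{g}}_\delta}\mathbf{X}_\delta\right)_+$ retains the $L^p$ bounds of Definition \ref{DefX}. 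You should note, however, that the paper's cutoff as literally written also deserves scrutiny: a function dropping from $1$ to $0$ on an interval of length $\sim\delta/2$ must have $|\varsigma'|\gtrsim\delta^{-1}$ somewhere, so the stated constraint $|\varsigma'|\le\delta^{-1/3}$ is only consistent if the transition is taken over a region of length at least $\delta^{1/3}$; your instinct that a naive cutoff on scale $\delta$ gives only $O(\delta^{-1})$ is correct, but the remedy is to widen the transition region (adjusting where the $O(1)$ versus $O(\delta^{-1/3})$ estimates hold), not to introduce a Poisson corrector.
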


\begin{proof}It is shown in \cite[Section 3]{Miao}, if the mean curvature along the corner is the same, then 	
	\begin{equation}
	\bar{\mathbf{R}}_{\delta}(t,x)=O(1), \qquad\qquad \text{as $\delta\to 0$},
	\end{equation}where the deformation region for the metric is a $\delta$-tubular neighborhood $(t,x)\in\mathcal{O}_{\delta}=\left[-\frac{\delta}{2},\frac{\delta}{2}\right]\times \partial\bar{\Omega}$. Next, near the corner surface $\partial\bar{\Omega}$, the 1-form is
	\begin{equation}
	\mathbf{X}=\mathbf{X}_t dt+\mathbf{X}_i dx^i,
	\end{equation} where $t$ is the geodesic normal coordinate and $x^i$ are coordinates on $\partial\bar{\Omega}$. We denote the deformation by
	\begin{equation} 
	\mathbf{X}_{\delta}= \mathbf{X}_{\delta t} dt + \mathbf{X}_{\delta i} dx^i.
	\end{equation}The deformation $\mathbf{X}_{\delta i}$ is similar to \cite[Lemma 5.1]{AlaeeKhuriYau} and its norm and tangential derivative are bounded. However, $\mathbf{X}_{\delta t}$ is different as following. By definition of $\mathbf{X}$, we have 
	\begin{equation}
	\mathbf{X}_{t}=X(\bar{\nu}),\qquad t<0, \qquad\text{and}\qquad \mathbf{X}_{t}=0,\qquad t>0.
	\end{equation}Let $\varsigma(t)>0$ be a smooth cut-off function defined as
	\begin{equation}
	\varsigma(t)=\begin{cases}
	1 &t\leq -\frac{\delta}{2}\\
	\varsigma'(t)\leq 0,\,|\varsigma'(t)|\leq \delta^{-1/3} & -\frac{\delta}{2}<t<-\frac{\delta^2}{200}\\
	0 &t\geq-\frac{\delta^2}{200}
	\end{cases}.
	\end{equation}Define the deformation $\mathbf{X}_{\delta t}(t,x)=\varsigma(t)X(\bar{\nu})(t,x)$ for all $t\in(-\delta,\delta)$. Then the divergence of the smoothed 1-form is 
	\begin{align}\label{p}
	\begin{split}
	\text{div}_{\bar{\mathbf{g}}_{\delta}}\mathbf{X}_{\delta}(t,x)&=\bigg(\partial_t \mathbf{X}_{\delta t}+\frac{1}{2}\mathbf{X}_{\delta t}\partial_t \log\det\gamma_{\delta}
	\\
	&+\frac{1}{\sqrt{\det \gamma_{\delta}}}
	\partial_i \left(\sqrt{\det\gamma_{\delta}}\gamma_{\delta}^{ij}
	\mathbf{X}_{\delta j}\right)\bigg)(t,x)\\
	&=\varsigma(t)'X(\bar{\nu})(t,x)+O(1)\\
	&=O(\delta^{-1/3}),\qquad (t,x)\in\mathcal{O}_{\delta}.
	\end{split}
	\end{align} where $\gamma_{\delta}$ is deformed metric on $\partial\bar{\Omega}$ defined in equation (11) of \cite{Miao}. The smoothing of $\bar{\mathbf{k}}_{\delta}$  is similar to \cite[Lemma 5.1]{AlaeeKhuriYau}.
\end{proof}

This deformation readies our initial data for a conformal deformation which will further improve the scalar curvature of the deformed Jang graph, and moreover which will do so without changing the ADM mass by much.

\begin{prop}\label{prop2}
	Given the deformation $(\bar{\mathbf{M}},\bar{\mathbf{g}}_{\delta},\bar{\mathbf{k}}_{\delta}, \mathbf{X}_{\delta})$ of Lemma \ref{lem1} and let $\bar{F}\subsetneq\bar{\Omega}\subset\bar{\mathbf{M}}$. For sufficiently small $\delta>0$, there exist a $C^2$ positive function $u_{\delta}\geq 1$ such that the conformal metric $\hat{{\mathbf{g}}}_{\delta}=u^4_{\delta}\bar{\mathbf{g}}_{\delta}$ has non-negative scalar curvature and satisfies 
	\begin{equation}\label{mass2}
	m_{ADM}(\hat{\mathbf{g}}_{\delta})\leq 	m_{ADM}(g_{+})+\frac{1}{8\pi}\int_{\partial\bar{\Omega}}X(\bar{\nu})dA_{\hat{\sigma}}.
	\end{equation}
\end{prop}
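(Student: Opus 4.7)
The plan is to adapt the Schoen--Yau conformal deformation of the Jang graph \cite{SchoenYauII} to the glued asymptotically flat data $(\bar{\mathbf{M}},\bar{\mathbf{g}}_\delta,\bar{\mathbf{k}}_\delta,\mathbf{X}_\delta)$ of Lemma \ref{lem1}, following the strategy of \cite{AlaeeKhuriYau}. We look for a conformal factor $u_\delta$ solving a linear elliptic problem of conformal-Laplacian type on $\bar{\mathbf{M}}$ (or equivalently on $\bar\Omega$ and $M_+$ with transmission conditions at $\partial\bar\Omega$) such that $u_\delta \geq 1$, $u_\delta \to 1$ at infinity, and $\hat{\mathbf{g}}_\delta = u_\delta^4\bar{\mathbf{g}}_\delta$ has $\hat R_\delta \geq 0$.

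The natural PDE is $-8\Delta_{\bar{\mathbf{g}}_\delta}u_\delta + \bar{\mathbf{R}}_\delta\,u_\delta = 0$, which automatically yields $\hat R_\delta \equiv 0$ by the conformal transformation law. Existence in the weighted Sobolev space $W^{2,p}_{-1/2}(\bar{\mathbf{M}})$ follows from coercivity of the associated quadratic form, which is delivered by the Schoen--Yau completion of the square applied to the Jang identity $\bar R \geq 2|\mathbf{X}_\delta|^2 - 2\Div_{\bar{\mathbf{g}}_\delta}\mathbf{X}_\delta$ on $\bar\Omega$ and the corner estimate \eqref{6yj} on $\mathcal{O}_\delta$; specifically, integrating $-2\phi^2\Div\mathbf{X}_\delta$ by parts on $\bar\Omega$ produces a $4\phi\mathbf{X}_\delta\cdot\nabla\phi$ term which combines with $2|\mathbf{X}_\delta|^2\phi^2$ to give $2|\nabla\phi + \phi\mathbf{X}_\delta|^2 - 2|\nabla\phi|^2$, and the admissibility $L^1,L^{6/5},L^{3/2}$ bounds of Definition \ref{DefX} ensure the Fredholm argument closes uniformly in $\delta$. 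A barrier/maximum-principle argument exploiting this nonnegativity and the asymptotic boundary condition $u_\delta \to 1$ then forces $u_\delta\geq 1$, and the solution has the expansion $u_\delta(x) = 1 + A_\delta/|x| + O(|x|^{-2})$ at infinity.

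For the mass bound, $\bar{\mathbf{g}}_\delta \equiv g_+$ outside a compact set gives $m_{ADM}(\bar{\mathbf{g}}_\delta) = m_{ADM}(g_+)$, and the conformal change yields $m_{ADM}(\hat{\mathbf{g}}_\delta) = m_{ADM}(g_+) + 2A_\delta$. Multiplying the PDE by $u_\delta$ and integrating over $\bar{\mathbf{M}}$, using $\int u_\delta(-8\Delta_{\bar{\mathbf{g}}_\delta}u_\delta) = 8\int|\nabla u_\delta|^2 + 32\pi A_\delta$, one gets
\begin{equation*}
32\pi A_\delta + 8\int_{\bar{\mathbf{M}}}|\nabla u_\delta|^2 + \int_{\bar{\mathbf{M}}}\bar{\mathbf{R}}_\delta u_\delta^2 = 0.
\end{equation*}
Invoking the Jang identity on $\bar\Omega$ and performing the integration by parts on the original region where $\mathbf{X}_\delta$ coincides with $X$ (the Jang-side of $\partial\bar\Omega$), the $\Div\mathbf{X}_\delta$ term produces the boundary integral $\int_{\partial\bar\Omega}u_\delta^2 X(\bar\nu)\,dA_{\hat\sigma}$ together with an $O(\delta^{2/3})$ error absorbing the $\Div\mathbf{X}_\delta = O(\delta^{-1/3})$ spike on $\mathcal{O}_\delta$ (of volume $O(\delta)$). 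Completing the square and discarding the manifestly non-positive bulk terms $-6\int|\nabla u_\delta|^2$ and $-2\int|\nabla u_\delta + u_\delta\mathbf{X}_\delta|^2$ yields
\begin{equation*}
32\pi A_\delta \leq 2\int_{\partial\bar\Omega}u_\delta^2\,X(\bar\nu)\,dA_{\hat\sigma} + O(\delta^{2/3}),
\end{equation*}
which upon a quantitative $C^0$ estimate $u_\delta \to 1$ on $\partial\bar\Omega$ as $\delta\to 0$ gives \eqref{mass2}.

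The principal technical difficulty is the tension between the smoothed $\mathbf{X}_\delta$ (which is continuous, hence has no jump at $\partial\bar\Omega$ to produce a boundary integral via classical integration by parts on the whole of $\bar{\mathbf{M}}$) and the desired boundary integral in the mass bound. The cleanest route is to perform the integration by parts on $\bar\Omega$ alone, exploiting the structural asymmetry that $\mathbf{X}_\delta$ equals $X$ on the Jang side and essentially vanishes on the extension side of $\mathcal{O}_\delta$, so that the boundary term survives. A further subtlety is replacing $u_\delta^2$ by $1$ on $\partial\bar\Omega$ in the final step, which requires a uniform modulus of continuity for $u_\delta$; alternatively, one may impose Dirichlet data $u_\delta=1$ on $\partial\bar\Omega$, extend trivially onto $M_+$, and control the resulting corner scalar curvature via a secondary Miao-type smoothing \cite{Miao}. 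The admissibility bounds of Definition \ref{DefX} are calibrated precisely to make the Sobolev/Fredholm estimates uniform in $\delta$ despite the $O(\delta^{-1/3})$ spike.
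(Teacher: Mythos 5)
Your overall strategy (conformal deformation of the glued data, Schoen--Yau completion of the square, mass shift $2A_\delta$) is the right family of ideas, but there are two concrete gaps relative to what the paper actually does.

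First, your choice of PDE breaks the claim $u_\delta\geq 1$. You propose to prescribe $\hat R_\delta\equiv 0$ by solving $-8\Delta_{\bar{\mathbf{g}}_\delta}u_\delta+\bar{\mathbf{R}}_\delta u_\delta=0$ with $u_\delta\to 1$ at infinity. Since $\bar{\mathbf{R}}_\delta$ is not signed, $u_\delta$ is subharmonic wherever $\bar{\mathbf{R}}_\delta>0$ (and $u_\delta>0$), so the maximum principle pushes $u_\delta$ \emph{below} $1$ there; no barrier argument will give $u_\delta\geq 1$ for this equation. The paper avoids this by solving away only the nonnegative ``bad part'': it sets $\kappa=K_{\delta-}+2\left(\operatorname{div}_{\bar{\mathbf{g}}_{\delta}}\mathbf{X}_{\delta}\right)_+\geq 0$, where $K_\delta=\bar{\mathbf{R}}_{\delta}-2|\mathbf{X}_{\delta}|^2+2\operatorname{div}\mathbf{X}_{\delta}-|\bar{\mathbf{k}}_{\delta}|^2$, and solves $\Delta w_\delta+\tfrac18\kappa w_\delta=-\tfrac18\kappa$ on $\bar{\mathbf{M}}\setminus\bar F$ with a Neumann condition on $\partial\bar F$. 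Then $u_\delta=1+w_\delta$ satisfies $\Delta u_\delta=-\tfrac18\kappa u_\delta\leq 0$, so superharmonicity plus the Hopf lemma at $\partial\bar F$ forces $u_\delta\geq 1$, and the leftover scalar curvature $K_{\delta+}+2|\mathbf{X}_\delta|^2+2(\operatorname{div}\mathbf{X}_\delta)_-+|\bar{\mathbf{k}}_\delta|^2$ is manifestly nonnegative. You also drop the set $\bar F$ and its Neumann condition entirely; these are needed downstream so that mean curvatures transform by $u_\delta^{-2}$ with no normal-derivative correction.

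Second, the boundary integral $\int_{\partial\bar\Omega}X(\bar\nu)\,dA_{\hat\sigma}$ in \eqref{mass2} is not obtained in the paper by integrating $\operatorname{div}\mathbf{X}_\delta$ by parts. The paper computes $32\pi A_\delta=\int\kappa(w_\delta+1)$, estimates it by H\"older using the $L^{6/5},L^{3/2},L^1$ admissibility bounds and the $L^6$ bound on $w_\delta$, and then the inequality $32\pi A_\delta\leq 2\int_{\partial\bar\Omega}X(\bar\nu)\,dA_{\hat\sigma}$ is \emph{imposed} as part of Definition \ref{DefX} via condition \eqref{A1} --- it is an assumption on the data, not a derived identity. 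Your integration-by-parts route produces $2\int_{\partial\bar\Omega}u_\delta^2X(\bar\nu)$, and since $X(\bar\nu)>0$ and you simultaneously want $u_\delta\geq 1$, the replacement $u_\delta^2\rightsquigarrow 1$ goes the wrong way: you would only get \eqref{mass2} with an extra factor $\sup_{\partial\bar\Omega}u_\delta^2>1$, which does not disappear for fixed small $\delta$. You correctly flag this tension in your last paragraph, but it is not a removable technicality in your setup; it is the reason the paper routes the estimate through the admissibility hypothesis instead.
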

\begin{proof}Let $\kappa=K_{\delta-}+2\left(\operatorname{div}_{\bar{\mathbf{g}}_{\delta}}\mathbf{X}_{\delta}\right)_+$ and $K_{\delta}=\bar{\mathbf{R}}_{\delta}-2|\mathbf{X}_{\delta}|_{\bar{\mathbf{g}}_{\delta}}^2
	+2\operatorname{div}_{\bar{\mathbf{g}}_{\delta}}\mathbf{X}_{\delta}-|\bar{\mathbf{k}}_{\delta}|^2_{\bar{\mathbf{g}}_{\delta}}$. Moreover, since $\varsigma(t)'\leq 0$ in Lemma \ref{lem1} and $X(\bar{\nu})> 0$ on $\mathcal{O}_{\delta}$, we have
	\begin{equation}\label{Kbound}
	\begin{cases}
	K_{\delta-}=0&\text{outside $\mathcal{O}_{\delta}$}\\
	|K_{\delta-}|\leq C_1\delta^{-1/3}&\text{inside $\mathcal{O}_{\delta}$}.\\
	\end{cases},
	\end{equation}
	where $C_1$ depends on initial data sets and independent of $\delta$. Combining Lemma \ref{lem1} and Definition \ref{DefX}, we have
	\begin{equation}\label{dbounds}
\norm{\left(\operatorname{div}_{\bar{\mathbf{g}}_{\delta}}X_{\delta}\right)_+}_{L^1(\bar{\Omega})}<d_1,\qquad \norm{\left(\operatorname{div}_{\bar{\mathbf{g}}_{\delta}}X_{\delta}\right)_+}_{L^{3/2}(\bar{\Omega})}<d_2,\\ 
	\end{equation}
	\begin{equation}\label{dbounds''}
	 \norm{\left(\operatorname{div}_{\bar{\mathbf{g}}_{\delta}}X_{\delta}\right)_+}_{L^{6/5}(\bar{\Omega})}<d_3,
	\end{equation}
	for sufficiently small $\delta$. Next we consider the following PDE
	\begin{equation}\label{PDE2}
	\begin{cases}
	\Delta_{\bar{\mathbf{g}}_{\delta}}w_{\delta}+\frac{1}{8}\kappa w_{\delta}=-\frac{1}{8}\kappa & \bar{\mathbf{M}}\,\backslash \bar{F}\\
	w_{\delta}=0& \infty\\
	\bar{\nu}(w_{\delta})=0& \partial \bar{F}
	\end{cases}.
	\end{equation}

	Following Lemma 3.2 and Lemma 3.3 of \cite{SchoenYauI}, it is enough to prove an $L^6(\bar{\mathbf{M}}\,\backslash\, F)$-bound for $w_{\delta}$ to show existence of a positive $C^2(\bar{\mathbf{M}}\,\backslash\, F)$ solution $w_{\delta}$ such that $w_{\delta}=\frac{A_{\delta}}{|x|}+O(|x|^{-2})$ as $|x|\to\infty$. We multiply the PDE in \eqref{PDE2} by $w_{\delta}$ and integrate by parts.
	\begin{equation}
\int_{\bar{\mathbf{M}}\,\backslash\, \bar{F}}|\nabla w_{\delta}|^2 dV_{\bar{\mathbf{g}}_{\delta}}=\frac{1}{8}\int_{\bar{\mathbf{M}}\,\backslash\, \bar{F}}\kappa w_{\delta}^2 dV_{\bar{\mathbf{g}}_{\delta}}-\frac{1}{8}\int_{\bar{\mathbf{M}}\,\backslash\, \bar{F}}\kappa w_{\delta} dV_{\bar{\mathbf{g}}_{\delta}}.
	\end{equation}Then using \eqref{Kbound}, \eqref{dbounds}, \eqref{dbounds''}, H\"older's inequality, and Young's inequality, we have
	\begin{equation}\label{in1}
	\begin{split}
	\norm{\nabla w_{\delta}}_{L^{2}(\mathbf{M}\,\backslash\, F)}^2&\leq\frac{1}{8}\norm{K_{\delta-}}_{L^{3/2}(\mathcal{O}_{\delta})}\norm{w_{\delta}}_{L^{6}(\mathbf{M}\,\backslash\, F)}^2\\
	&+\frac{1}{8}\norm{K_{\delta-}}_{L^{6/5}(\mathcal{O}_{\delta})}\norm{w_{\delta}}_{L^{6}(\mathbf{M}\,\backslash\, F)}\\
	&+\frac{1}{8}\norm{\left(\operatorname{div}_{\bar{\mathbf{g}}_{\delta}}X_{\delta}\right)_+}_{L^{3/2}(\bar{\Omega}\,\backslash\, F)}\norm{w_{\delta}}_{L^{6}(\bar{\mathbf{M}}\,\backslash\, F)}^2\\
	&+\frac{1}{8}\norm{\left(\operatorname{div}_{\bar{\mathbf{g}}_{\delta}}X_{\delta}\right)_+}_{L^{6/5}(\bar{\Omega}\,\backslash\, F)}\norm{w_{\delta}}_{L^{6}(\bar{\mathbf{M}}\,\backslash\, F)}\\
	&\leq\frac{1}{8}C_1{\delta}^{1/3}|\partial\bar{\Omega}|^{2/3}\norm{w_{\delta}}_{L^{6}(\mathbf{M}\,\backslash\, F)}^2+\frac{C_{\delta}}{32}d_3^2\\
	&+\frac{C_{\delta}}{32}C_1^2{\delta}|\partial\bar{\Omega}|^{5/3}+\frac{d_2}{8}\norm{w_{\delta}}_{L^{6}(\mathbf{M}\,\backslash\, F)}^2\\
	&+\frac{1}{8C_{\delta}}\norm{w_{\delta}}_{L^{6}(\mathbf{M}\,\backslash\, F)}^2+\frac{1}{8C_{\delta}}\norm{w_{\delta}}_{L^{6}(\mathbf{M}\,\backslash\, F)}^2\\
	&\leq\frac{1}{8}\left(C_1\delta^{1/3}|\partial\bar{\Omega}|^{2/3}+d_2+2C_{\delta}^{-1}\right)\norm{w_{\delta}}_{L^{6}(\mathbf{M}\,\backslash\, F)}^2\\
	&+\frac{1}{32}C_{\delta}C_1^2\delta|\partial\bar{\Omega}|^{5/3}+\frac{1}{32}C_{\delta}d_3^2.
	\end{split}
	\end{equation}
	Recall the Sobolev inequality \cite[Lemma 3.1]{SchoenYauI}  
	\begin{equation}\label{in2}
	\norm{w_{\delta}}_{L^{6}(\mathbf{M}\,\backslash\, F)}^2\leq C_{\delta}\norm{\nabla w_{\delta}}_{L^{2}(\mathbf{M}\,\backslash\, F)}^2,
	\end{equation}where $C_{\delta}$ is the Sobolev constant and it is uniformly close to Sobolev constant of $\bar{\mathbf{g}}$. For sufficiently small $\delta>0$, by Definition \ref{DefX},  $d_2$ satisfies
	\begin{equation}\label{in3}
	C_1\delta^{1/3}|\partial\bar{\Omega}|^{2/3}+d_2\leq \frac{2}{3C_{\delta}}.
	\end{equation}Combining equations \eqref{in1}, \eqref{in2}, and \eqref{in3}, we get the $L^6$ bound for $w_{\delta}$, that is 
	\begin{equation}\label{L6bound}
	\norm{w_{\delta}}_{L^{6}(\mathbf{M}\,\backslash\, F)}^2\leq \frac{3}{64}C_{\delta}^2C_1^2\delta|\partial\bar{\Omega}|^{5/3}+\frac{3}{64}C_{\delta}^2d_3^2.
	\end{equation} Then, we define $u_{\delta}=w_{\delta}+1\geq 1$. Moreover, the scalar curvature of the conformal metric $\hat{{\mathbf{g}}}_{\delta}=u_{\delta}^4\bar{{\mathbf{g}}}_{\delta}$ is
	\begin{equation*}
	\begin{split}
	R(\hat{{\mathbf{g}}}_{\delta})&=u_{\delta}^{-5}\left(R(\bar{{\mathbf{g}}}_{\delta})u_{\delta}-\frac{1}{8}\Delta_{\bar{\mathbf{g}}_{\delta}}u_{\delta}\right)\\
	&=u_{\delta}^{-5}\left(K_{\delta}u_{\delta}+2|\mathbf{X}_{\delta}|_{\bar{\mathbf{g}}_{\delta}}^2u_{\delta}
	-2\operatorname{div}_{\bar{\mathbf{g}}_{\delta}}\mathbf{X}_{\delta}u_{\delta}+|\bar{\mathbf{k}}_{\delta}|^2_{\bar{\mathbf{g}}_{\delta}}u_{\delta}-\frac{1}{8}\Delta_{\bar{\mathbf{g}}_{\delta}}u_{\delta}\right)\\
	&=u_{\delta}^{-5}\left(K_{\delta +}u_{\delta}+2|\mathbf{X}_{\delta}|_{\bar{\mathbf{g}}_{\delta}}^2u_{\delta}
	+2\left(\operatorname{div}_{\bar{\mathbf{g}}_{\delta}}\mathbf{X}_{\delta}\right)_-u_{\delta}+|\bar{\mathbf{k}}_{\delta}|^2_{\bar{\mathbf{g}}_{\delta}}u_{\delta}\right)\geq 0.
	\end{split}
	\end{equation*}Next we compute the ADM mass of the conformal metric 
	\begin{equation}\label{mass1}
	m_{ADM}(\hat{\mathbf{g}}_{\delta})=m_{ADM}(\bar{\mathbf{g}}_{\delta})+2A_{\delta}=m_{ADM}(g_+)+2A_{\delta},
	\end{equation}where $1\leq u_{\delta}=1+\frac{A_{\delta}}{|x|}+O(|x|^{-2})$ and $A$ is a positive constant 
	\begin{equation}
	\begin{split}
32\pi A_{\delta}&=\int_{\bar{\mathbf{M}}\,\backslash\, \bar{F}}\kappa (w_{\delta}+1)dV_{\bar{\mathbf{g}}_{\delta}}\\
	&=\int_{\mathcal{O}_{\delta}}K_{\delta-}w_{\delta}dV_{\bar{\mathbf{g}}_{\delta}}+\int_{\mathcal{O}_{\delta}}K_{\delta-}dV_{\bar{\mathbf{g}}_{\delta}}\\
	&+2\int_{\bar{\Omega}\,\backslash\, \bar{F}}\left(\operatorname{div}_{\bar{\mathbf{g}}_{\delta}}\mathbf{X}_{\delta}\right)_+w_{\delta}dV_{\bar{\mathbf{g}}_{\delta}}+2\int_{\bar{\Omega}\,\backslash\, \bar{F}}\left(\operatorname{div}_{\bar{\mathbf{g}}_{\delta}}\mathbf{X}_{\delta}\right)_+dV_{\bar{\mathbf{g}}_{\delta}}.
	\end{split}
	\end{equation}Then using \eqref{Kbound}, \eqref{dbounds}, \eqref{dbounds''}, \eqref{L6bound}, and H\"older's inequality, we have 
	\begin{equation}\label{A2}
	\begin{split}
32\pi A_{\delta}	&\leq \norm{K_{\delta-}}_{L^{6/5}(\mathcal{O}_{\delta})}\norm{w_{\delta}}_{L^{6}(\bar{\mathbf{M}}\,\backslash\, F)}+C_1\delta^{2/3}|\partial\bar{\Omega}|\\
	&+2\norm{\left(\operatorname{div}_{\bar{\mathbf{g}}_{\delta}}\mathbf{X}_{\delta}\right)_+}_{L^{6/5}(\mathcal{O}_{\delta})}\norm{w_{\delta}}_{L^{6}(\bar{\mathbf{M}}\,\backslash\, F)}+2d_1\\
	&\leq \left(C_1\delta^{1/2}|\partial\bar{\Omega}|^{5/6}+2d_3\right)\norm{w_{\delta}}_{L^{6}(\bar{\mathbf{M}}\,\backslash\, F)}\\
	&+C_1\delta^{2/3}|\partial\bar{\Omega}|+2d_1\\
	&\leq \frac{1}{8}\left(C_1\delta^{1/2}|\partial\bar{\Omega}|^{5/6}+2d_3\right)\left(3C_{\delta}^2C_1^2\delta|\partial\bar{\Omega}|^{5/3}+3C_{\delta}^2d_3^2\right)^{1/2}\\
	&+C_1\delta^{2/3}|\partial\bar{\Omega}|+2d_1\,.
	\end{split}
	\end{equation}For sufficiently small $\delta>0$, by Definition \ref{DefX}, $d_1$ and $d_3$ satisfy 
\begin{equation}\label{A1}
	\begin{split}
	&\frac{1}{8}\left(C_1\delta^{1/2}|\partial\bar{\Omega}|^{5/6}+2d_3\right)\left(3C_{\delta}^2C_1^2\delta|\partial\bar{\Omega}|^{5/3}+3C_{\delta}^2d_3^2\right)^{1/2}\\
	&+C_1\delta^{2/3}|\partial\bar{\Omega}|+2d_1\leq  2\int_{\partial\bar{\Omega}}X(\bar{\nu})dA_{\hat{\sigma}}\,.
	\end{split}
\end{equation}
	Combining this with \eqref{A2} and \eqref{mass1}, we get \eqref{mass2}, which completes the proof.
\end{proof}

We now have an asymptotically flat Riemannian manifold $(\bar{\mathbf{M}}\,\backslash\, \bar{F},\hat{\mathbf{g}}_{\delta})$ for $\bar{F}\subsetneq\bar{\Omega}$ and by running the inverse mean curvature flow in $\bar{\mathbf{M}}\,\backslash\, \bar{F}$, we can prove Theorem \ref{thm1.1}.\\\\

\begin{proof}[Proof of Theorem \ref{thm1.1}] Assume $\Omega$ is admissible. Let $\bar{E}\subsetneq \bar{\Omega}$ be a minimizing hull and without loss of generality we assume $m_{H}(\partial\bar{E})>0$. Let $\theta>0$ be given. We can find a connected set $\bar{F}\supset \bar{E}$ with
	smooth boundary $\partial\bar{F}$ such that $\bar{F}\subsetneq\bar{\Omega}$, $ m_{H}(\partial \bar{F})>0$ and 
	\begin{equation}\label{FandE}
	|\partial \bar{F}|_{\bar{g}}-\theta\leq |\partial\bar{E}|_{\bar{g}}\leq |\partial \bar{F}|_{\bar{g}}+\theta,\quad m_{H}(\partial\bar{E})\leq m_{H}(\partial \bar{F})+\theta.
	\end{equation}Then we attach the Jang graph and obtain a complete Riemannian manifold $$(\bar{\mathbf{M}},\bar{\mathbf{g}},\bar{\mathbf{k}},\mathbf{X})=(\overline{M}\cup M_+,\overline{g}\cup g_+,\bar{k}\cup k_+,X\cup X_+),$$ with ADM mass $m_{ADM}(g_+)$. We follow Lemma \ref{lem1} and Proposition \ref{prop2}, to obtain smooth Riemannian $(\bar{\mathbf{M}}\,\backslash\,\bar{F},\hat{\mathbf{g}}_{\delta})$ with non-negative scalar curvature. Let $\bar{F}'$ be strictly minimizing hull containing $\bar{F}$ in $(\bar{\mathbf{M}},\hat{\mathbf{g}}_{\delta}\cup \bar{g}_+)$. \\ \\ \indent 
	As in Theorem 3.1 of \cite{ShiTamHorizons}, $\bar{F}$ and $\bar{F}'$ are connected. By simple connectedness of the domain and the Seifert van Kampen theorem, $\partial\bar{\Omega}$ is homeomorphic to $S^2$. We now show that $\partial \bar{F}'$ is connected. \\ \indent 
	By the definition of $\bar{F}'$, we know that $\partial \bar{F}'$ seperates $\bar{\Omega}\cup \bar{M}$. Suppose, for contradiction, that $\partial F'$ has at least two components, $A$ and $B$. In that case, we may consider a closed curve $\gamma$ intersecting $A$ and $B$. By simple connectedness, there exists a map $f:D^2\to \bar{\Omega}\cup \bar{M}$ such that $f(\partial D^2)$ is a homeomorphism onto $\gamma$. Generically, $f^{-1}(A)$ is a compact properly embedded 1-manifold in $D^2$. But this cannot be since $\gamma$ only intersects $A$ once. Thus, $\partial \bar{F}'$ has a single component.\\ \\ \indent
	Returning to our extension and the flow, since the scalar curvature of $\hat{\mathbf{g}}_{\delta}$ has been made non-negative, the weak inverse mean curvature flow emanating from $\partial \bar{F}'$ produces
	\begin{equation}
	\begin{split}
	m_{{ADM}}(\hat{\mathbf{g}}_{\delta})&\geq \sqrt{\frac{|\partial \bar{F}'|_{\hat{\mathbf{g}}_{\delta}}}{16\pi}}\left(1-\frac{1}{16\pi}\int_{\partial \bar{F}'}\hat{H}_{\delta}dA_{\hat{\mathbf{g}}_{\delta}}\right)\\
	&\geq \sqrt{\frac{|\partial \bar{F}'|_{\hat{\mathbf{g}}_{\delta}}}{|\partial \bar{F}|_{\bar{g}}}}\sqrt{\frac{|\partial \bar{F}|_g}{16\pi}}\left(1-\frac{1}{16\pi}\int_{\partial \bar{F}}\bar{H}dA_{\bar{\mathbf{g}}}\right)\\
	&= \sqrt{\frac{|\partial \bar{F}'|_{\hat{\mathbf{g}}_{\delta}}}{|\partial \bar{F}|_{\bar{g}}}}m_{H}(\partial F)\\
	&\geq \sqrt{\frac{|\partial \bar{F}'|_{\hat{\mathbf{g}}_{\delta}}}{|\partial \bar{F}|_{\bar{g}}}}\left(m_{H}(\partial \bar{E})-\theta\right)\\
	&\geq \sqrt{\frac{|\partial \bar{F}'|_{\bar{\mathbf{g}}}}{|\partial \bar{F}|_{\bar{g}}}}m_{H}(\partial \bar{E})-\theta\sqrt{\frac{|\partial \bar{F}'|_{\hat{\mathbf{g}}_{\delta}}}{|\partial \bar{F}|_{\bar{g}}}}\\
	&\geq \sqrt{\frac{|\partial \bar{E}|_{\bar{g}}}{|\partial \bar{F}|_{\bar{g}}}}m_{H}(\partial \bar{E})-\theta C\\
	&\geq \sqrt{\frac{|\partial \bar{E}|_{\bar{g}}}{|\partial E|_{\bar{g}}+\theta}}m_{H}(\partial \bar{E})-\theta C.
	\end{split}
	\end{equation}The first inequality follows from the Geroch monotonicity of inverse mean curvature flow. The second inequality used the fact that mean curvature of $\partial \bar{F}'$ in $\hat{\mathbf{g}}_{\delta}$ is zero on $\partial \bar{F}'\,\backslash\,\partial F$ and the mean curvature of $\partial \bar{F}'$ and $\partial \bar{F}$  are the same $\hat{H}_{\delta}=\bar{H}u_{\delta}^{-2}$ a.e. on $\partial \bar{F}'\cap \partial \bar{F}$. The third inequality follows from equation \eqref{FandE} and the fourth inequality follows from $u_{\delta}\geq 1$. To get the fifth inequality we combine the following two facts. First, following the proof of Theorem 3.1 of \cite{ShiTamHorizons}, we obtain $|\partial \bar{F}'|_{\bar{\mathbf{g}}}\geq |\partial \bar{E}|_{\bar{g}}$. Second, if there is no blow-up in the Jang graph, $u_{\delta}$ is bounded on compact manifold $\bar{\Omega}$ and we get $\sqrt{\tfrac{|\partial \bar{F}'|_{\hat{\mathbf{g}}_{\delta}}}{|\partial \bar{F}|_{\bar{g}}}}\leq C$ for some constant $C<\infty$. On the other hand, if there is a blow-up in the Jang graph and we have asymptotically cylindrical end \cite{SchoenYauI,AnderssonMetzger}, by analyzing eigenvalues of the operator $\Delta_{\bar{\mathbf{g}}_{\delta}}+\frac{1}{8}\kappa$, we observe that $u_{\delta}(t,x)\sim a_0+\sum_{i=1}^{\infty}a_ie^{-\sqrt{\lambda_i}t}\xi_i(x)$, where $a_0> 1$ and $a_i$ are constants and $\lambda_i$ and $\xi_i(x)$ are eigenvalue and eigenfunction of the Laplacian operator at the cylindrical end, respectively. Therefore, $u_{\delta}$ is bounded on $\bar{\Omega}$ and we have $\sqrt{\tfrac{|\partial \bar{F}'|_{\hat{\mathbf{g}}_{\delta}}}{|\partial \bar{F}|_{\bar{g}}}}\leq C$. Clearly, the last inequality follows from equation \eqref{FandE}. Hence, as $\theta\to 0$ we obtain 
	\begin{equation}
	m_{ADM}(\hat{\mathbf{g}}_{\delta}) \geq m_{H}(\partial \bar{E}).
	\end{equation}Combining this with Proposition \ref{prop2}, we have 
	\begin{equation}\label{m1}
	m_{ADM}(g_+)+\frac{1}{8\pi}\int_{\partial\bar{\Omega}}X(\bar{\nu})dA_{\hat{\sigma}}\geq m_{H}(\partial \bar{E}).
	\end{equation}By Wang and Yau \cite{WangYauPMT1} and admissible condition $X(\bar{\nu})>0$, we have
	\begin{equation}\label{m2}
	\begin{split}
	m_{WY}(\partial \Omega)&\geq \frac{1}{8\pi}\int_{\partial\bar{\Omega}}\left(\hat{H}_0-\left(\bar{H}-X(\nu)\right)\right)dA_{\hat{\sigma}}\\
	&\geq \frac{1}{8\pi}\int_{\partial\bar{\Omega}}\left(\hat{H}_0-\bar{H}\right)dA_{\hat{\sigma}}+\frac{1}{8\pi}\int_{\partial\bar{\Omega}}X(\bar{\nu})dA_{\hat{\sigma}}\\
	&\geq m_{ADM}(g_+)+\frac{1}{8\pi}\int_{\partial\bar{\Omega}}X(\bar{\nu})dA_{\hat{\sigma}}\,.
	\end{split}
	\end{equation}Combining \eqref{m1} and \eqref{m2}, the desired inequality for the Wang-Yau quasi-local mass in Theorem \ref{thm1.1} is achieved. A similar argument leads to the inequality for the Liu-Yau quasi-local mass. 
\end{proof}
\begin{proof}[Proof of Proposition \ref{cor1}] If the projection of MOTS $S$ is outward minimizing surface $\bar{S}$ in the blow up Jang graph, then applying Theorem \ref{thm1.1}, we have 
	\begin{equation}
	m_{WY}(\partial\Omega)\geq m_{H}(\bar{S})=\sqrt{\frac{|\bar{S}|}{16\pi}}\,.
	\end{equation}But since $|\bar{S}|\geq |S|$, we get the result. Similarly, the inequality follows for outward minimizing minimal surfaces in a Jang graph.

\end{proof}

\begin{proof}[Proof of Theorem \ref{thm1.4}]
The argument is by contradiction and roughly follows from combing the radius definition of Schoen and Yau \cite{SchoenYauIII} and Theorem 3.2 of \cite{ShiTamHorizons}, with the main difference being that $m^*(\Omega;\bar{\Omega})$ does not detect minimal surfaces, unlike $m_{ST}(\Omega)$ in Riemannian setting. \\ \indent 
Suppose that there is no MOTS in $\Omega$. Since $\partial\Omega$ is untrapped, then by Proposition 2 of \cite{SchoenYauIII} there exist a unique smooth bounded Jang graph $\bar{\Omega}$ solution to the Dirichlet problem \eqref{Jang}. Now consider some $\bar{\Omega}_1 \subsetneq  \bar{\Omega}_2\subset \bar{\Omega}$ with smooth boundaries, and let $\bar{E}$ be a connected minimizing hull of $\bar{\Omega}_2$ with $C^2$ boundary such that $\bar{E}\subset \bar{\Omega}_1$. Wanting to prove that $m_{LY}(\partial \Omega)\geq m^*(\Omega; \bar{\Omega})$, it suffices to show that $m_{LY}(\partial \Omega)\geq \alpha^*_{\bar{\Omega}_1,\bar{\Omega}_2}m_H(\partial \bar{E})$.\\ \indent 
Let $\bar{E}'$ be the strictly minimizing hull of $\bar{E}$ in $\bar{\Omega}$, which we know exists since $\partial\bar{\Omega}$ is mean convex. By definition we have that $\partial \bar{E} \cap \partial \bar{V}\neq \emptyset$ for some connected strictly minimizing hull $\bar{V}$ of $\bar{\Omega}$ and $\partial\bar{E}'$ is connected. By the definition of strictly minimizing hulls and the Regularity Theorem 1.3 of \cite{HuiskenIlmanen}, we know that $\partial \bar{E}'$ is $C^{1,1}$, and that $H_{\partial \bar{E}'\backslash \partial \bar{E}}=0$ and the mean curvature $H$ of $\partial \bar{E}$ and $\partial \bar{E}'$ are equal when $\partial \bar{E}$ and $\partial \bar{E}'$ intersect. Using Theorem 1.2, it follows that 
\begin{equation}\label{in12}
\begin{split}
m_{LY}(\partial \Omega)&\geq m_H(\partial \bar{E}')=\sqrt{\frac{|\partial \bar{E}'|}{16\pi}}\left( 1-\int_{\partial \bar{E}'} H^2dA\right)\\
&=\sqrt{\frac{|\partial \bar{E}'|}{|\partial \bar{E}|}}m_H(\partial \bar{E}) \,.
\end{split}
\end{equation}
In general, it could be that $\partial \bar{E}$ and $\partial \bar{E}'$ do not intersect. This only occurs when there is an outward minimizing minimal surface outside of $\bar{E}$. When such a jump occurs, then is no general relation between the mean curvature or indeed area of $\partial \bar{E}$ and $\partial \bar{E}'$ (apart from the fact that $H=0$ on $\partial \bar{E}'$). This scenario is forbidden by the fact that $\partial \bar{E}\cap \partial \bar{V}\neq \emptyset$ for some connected strictly minimizing hull $\bar{V}$ of $\bar \Omega$. \\ \indent
Therefore there are only two cases, either $\partial \bar{E}'\subset \bar{\Omega}_2$ or $\bar{\Omega}\backslash \bar{\Omega}_2 \cap \partial \bar{E}'\neq \emptyset$. The former case occurs when $\bar{E}'$ is actually also a strictly minimizing hull with respect to $\bar{\Omega}_2$, and in that case since $\bar{E}$ is also minimizing with respect to $\bar{\Omega}_2$, it follows by definition that $|\partial \bar{E}'|\geq |\partial \bar{E}|$, which in turn yields $m_{LY}(\partial \Omega)\geq m_H(\partial \bar{E})\geq m^*(\Omega;\bar{\Omega})$, where we have used that ${\alpha^*}_{\bar{\Omega}_1,\bar{\Omega}_2}\leq 1$. \\ \indent 
If $\partial E'\not\subset \bar{\Omega}_2$, then we are in the setting of the Theorem 3.2 of \cite{ShiTamHorizons}, which due to Meeks and Yau \cite{MeeksYau} we can estimate minimal surface area part of $\partial\bar{E}'$. In this case, we have 
\begin{equation}
\partial\bar{E}'\cap\bar{\Omega}_1\supset \partial\bar{E}'\cap\partial\bar{E}\neq\emptyset.
\end{equation}Therefore, the distance $d$ of $\bar{\Omega}_1$ and $\partial\bar{\Omega}_2$ is large enough such that there exist a $x$ in minimal part of $\partial\bar{E}'$ with $d(x,\partial\bar{\Omega}_2)=\frac{d}{2}$. By Lemma 2.4 of \cite{MeeksYau}, for any ball $B_x(r)$ centred at $x$ with radius $r=\min\{\iota,\frac{d}{2}\}$, where $\iota$ is the infinmum of the injectivity of points in $\{x;\; d(x,\partial\bar{\Omega}_2)>\frac{d}{4}\}$, we have
\begin{equation}
\begin{split}
|\partial \bar{E}'|&\geq |\partial \bar{E}'\cap B_x(r)|\geq CK^{-2}\int_0^r\tau^{-1}\sin(K\tau)^2d\tau\\
	&= {\alpha^*}^2_{\bar{\Omega}_1,\bar{\Omega}_2}|\partial\bar{\Omega}_1| {\beta_{\Omega,\bar{\Omega}}}\\
		&\geq  {\alpha^*}^2_{\bar{\Omega}_1,\bar{\Omega}_2}|\partial\bar{E}|.
\end{split}
\end{equation}The second inequality is by Meeks and Yau \cite{MeeksYau} and the last inequality follows from ${\beta_{\Omega,\bar{\Omega}}}\geq 1$ and the fact that $\bar{E}$ is a minimizing hull with respect to $\bar{\Omega}_2$. Hence combining this with inequality \eqref{in12}, we obtain $m_{LY}(\partial \Omega)\geq m^*(\Omega;\bar{\Omega})$ which is a contradiction. The inequality involving $m_{WY}(\partial\Omega)$ follows similarly.

Now to obtain the inequality involving $\text{diam}(\partial\Omega)$ and $m_{LY}(\partial\Omega)$, we use a standard Minkowski formula in $\mathbb{R}^3$. Let $S$ be the smallest sphere that circumscribes and envelops $\Omega$ in $\mathbb{R}^3$. Let $\mathbf{x_0}$ be its centre and $R$ its radius. Using the formula for $m_{LY}(\partial \Omega)$ and equation (ii), Lemma 6.2.9, pg. 136 of Klingenberg \cite{Klingenberg}, we have by Gauss-Bonnet theorem
\begin{equation}\label{diameter}
\begin{split}
m^*(\Omega;\bar{\Omega})&\leq m_{LY}(\partial \Omega)\\
&<\frac{1}{8\pi} \int_{\partial \Omega} H_0\\
&\leq \frac{R}{8\pi} \int_{\partial \Omega} K_{\partial\Omega}\\
& = \frac{1}{2} R \leq \frac{1}{4}\text{diam}(\partial \Omega),
\end{split}
\end{equation}
where we use the positivity of Gauss curvature $K_{\partial\Omega}$ in the third inequality. This is a contradiction and complete the proof.
\end{proof}
Before proving Proposition 1.5, we recall the following Lemma from \cite[Lemma 3.6]{ShiTamHorizons}.
\begin{lem}\label{Lem3}
	Let $(\Omega,g)$ be a compact Riemannian 3-manifold with smooth mean convex boundary $\partial\Omega$ such that Gauss curvature of $\partial\Omega$ is positive. Suppose $E\subset\subset\Omega$ such that $\partial E$ is a isoperimetric surface. Then either $\Omega$ contains an outward minimizing minimal surface or $E$ is a minimizing hull.
\end{lem}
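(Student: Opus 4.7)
My plan is to argue by contradiction: assume $E$ is not a minimizing hull, and produce an outward minimizing minimal surface in $\Omega$.

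First, I would construct the strict minimizing hull $E^{\ast}$ of $E$ in $\Omega$ by the direct method, minimizing the perimeter functional $F\mapsto|\partial^{\ast}F|$ over the class $\{F\subset\Omega : F\supset E,\, F\setminus E\subsetneq\Omega\}$. Existence follows from lower semicontinuity of perimeter in the $BV$ topology together with compactness; the mean convexity of $\partial\Omega$ prevents $\partial E^{\ast}$ from touching $\partial\Omega$, since any bubble of $\partial E^{\ast}$ meeting $\partial\Omega$ could be removed by a small inward perturbation, strictly decreasing perimeter. Since $E$ is assumed not to be a minimizing hull, the containment must be strict: $E\subsetneq E^{\ast}$. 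Next I would invoke the Huisken-Ilmanen regularity theorem for minimizing hulls (Theorem 1.3 of \cite{HuiskenIlmanen}): $\partial E^{\ast}$ is $C^{1,1}$, the free part $\Sigma:=\partial E^{\ast}\setminus\partial E$ is a nonempty $C^{1,1}$ minimal surface ($H\equiv 0$), and on the coincidence set $\partial E^{\ast}\cap\partial E$ the mean curvature equals $H_{\partial E}$.

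The main step would be to conclude that $\partial E^{\ast}\cap\partial E$ has empty relative interior in $\partial E^{\ast}$, so that $\partial E^{\ast}=\overline{\Sigma}$ is a closed $C^{1,1}$ minimal surface and hence an outward minimizing minimal surface (it bounds a minimizing hull). For this I would use the isoperimetric property of $\partial E$: first variation yields constant mean curvature $H_{E}$, and if $H_{E}>0$, one can perturb $\Sigma$ inward near an interior touching point of $\partial E^{\ast}\cap\partial E$ to construct a strictly smaller competitor for $E^{\ast}$, contradicting its minimality. The borderline subcase $H_{E}=0$ should be dealt with separately by observing that $\partial E$ itself would then be a stable minimal surface, and verifying directly that it is outward minimizing. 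The hypotheses that $\partial\Omega$ is mean convex with positive Gauss curvature enter both in keeping $\partial E$ compactly away from $\partial\Omega$ (so the arguments above apply verbatim) and, in the wider context, in the Minkowski/Gauss-Bonnet estimates needed by the surrounding lemmas of Shi-Tam.

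The main obstacle I anticipate is handling the $C^{1,1}$ (rather than smooth) regularity of $\partial E^{\ast}$ across the interface $\overline{\Sigma}\cap\partial E$: the mean curvature can jump from $0$ to $H_{E}$ there, so one cannot directly apply the smooth strong maximum principle to compare $\partial E^{\ast}$ and $\partial E$. The correct tool is a variational perturbation argument using obstacle-problem regularity in the spirit of Huisken-Ilmanen, rather than a direct PDE maximum principle; making this perturbation rigorous at the free boundary is the technical crux.
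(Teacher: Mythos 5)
First, a point of reference: the paper does not prove this lemma at all; it is quoted from Shi--Tam \cite[Lemma 3.6]{ShiTamHorizons}, so there is no in-paper argument to compare against. Your skeleton is nevertheless the standard one: argue by contradiction, pass to the strictly minimizing hull $E^{\ast}$, invoke the Huisken--Ilmanen $C^{1,1}$ regularity and the decomposition of $\partial E^{\ast}$ into a minimal free part and a contact part, and use that $\partial E$ has constant mean curvature $H_{E}$ by the first variation of the isoperimetric problem.

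The gap is in your main step. The perturbation you propose --- pushing the surface inward near an interior point of the contact set when $H_{E}>0$ to obtain a smaller competitor for $E^{\ast}$ --- is not available: competitors for the minimizing hull must \emph{contain} $E$, so inward perturbations on the contact set are forbidden by the obstacle constraint, while outward perturbations there \emph{increase} area to first order precisely because $H_{E}>0$; and on the free part the first variation vanishes since it is minimal. A mean-convex obstacle is therefore perfectly compatible with a nonempty contact set, and no contradiction with minimality arises; indeed a minimal free sheet can attach tangentially to an obstacle (think of the hull of a non-convex blob in a Euclidean ball), so no first-variation argument can do the job. The correct mechanism is the strong maximum principle: at a point $p$ of the relative boundary of the contact set inside a component $S$ of $\partial E^{\ast}$, write $S$ and $\partial E$ as graphs $v\ge u$ over $T_{p}\partial E$ with a $C^{1,1}$ tangency at $p$; then $w=v-u\ge 0$ satisfies $Lw=H[v]-H[u]\le -H_{E}\le 0$ a.e.\ (using $H[v]=0$ on the free part, $w\equiv 0$ on the contact part, and $H_{E}\ge 0$), so the strong minimum principle for $W^{2,\infty}$ supersolutions forces $w\equiv 0$ near $p$. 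Hence the contact set is open \emph{and} closed in each component of $\partial E^{\ast}$, so some component is entirely free, i.e.\ a closed minimal surface bounding a minimizing hull --- not, as you claim, that the contact set has empty relative interior. You must also treat $H_{E}<0$ (there the contact set is empty outright, since outward variation would strictly decrease area, contradicting minimality of $E^{\ast}$), and the case $H_{E}=0$ cannot be settled by ``verifying directly'' that $\partial E$ is outward minimizing: isoperimetric only controls competitors of equal volume, so a minimal isoperimetric surface need not be outward minimizing; instead one runs the same hull/maximum-principle argument, which still applies since $H_{E}\ge 0$.
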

The proof of Propositions \ref{prop1.5} and \ref{prop1.6} are as follows.
\begin{proof}[Proof of Proposition \ref{prop1.5}]By Lemma \ref{Lem3} and Theorem \ref{thm1.1}, a isoperimetric surface $\bar{V}$ in $\bar{\Omega}$ satisfying $m_{H}(\bar{V})\geq m_{LY}(\partial\Omega)$ is not a minimizing hull in $\bar{\Omega}$, therefore, there is an outward minimizing minimal surface $\bar{S}$ in $\bar{\Omega}$. 
\end{proof}

\begin{proof}[Proof of Proposition \ref{prop1.6}]
Combining Theorem 1.2 and the argument of Theorem 3.2 of \cite{ShiTamHorizons}, if there are no minimal surfaces in $\bar{\Omega}$, then $m_{LY}(\partial \Omega)\geq m_{ST}(\bar{\Omega})$. Flipping the inequality yields the desired result and the diameter bound is similar to \eqref{diameter}. The argument for $m_{WY}(\partial \Omega)$ is identical if there exists an admissible function $\tau$. 
 \end{proof}

Finally, the proof of the Proposition \ref{prop1.7} is similar to \cite[Theorem 2.3]{Corvino}. 
\begin{proof}[Proof of Proposition \ref{prop1.7}]
		 Assume $\bar{\Omega}$ is not diffeomorphic to a ball in $\mathbb{R}^3$. By Meeks-Simon-Yau \cite[Theorem 1]{MeeksSimonYau}, there exist an outward minimizing minimal surface $\bar{S}=\mathbb{S}^2$ in $\bar{\Omega}$. For a given point $p\in \bar{S}$ let $\{e_1,e_2\}$ be a basis of $T_p \bar{S}$ in which the second fundamental form $\Pi$ is diagonal and denote the principal curvatures at $p$ by $\kappa_i$. The Gauss equation implies
\begin{equation}
K_{\bar{S}}={Rm}_{\bar{S}}(e_1,e_2,e_1,e_2)={Rm}_{\bar{\Omega}}(e_1,e_2,e_1,e_2)+\kappa_1\kappa_2.
\end{equation}where $K_{\bar{S}}$ is the Guass curvature of $\bar{S}$. Since $\bar{S}$ is minimal, $\kappa_1\kappa_2\leq0$ at $p$ and therefore we have
	\begin{equation}
	K_{\bar{S}}\leq C^2.
	\end{equation}By the Gauss--Bonnet Theorem and assumption $m_{LY}(\partial\Omega)<\frac{1}{2C}$, we have
	\begin{equation}
	4\pi=\int_{\bar{S}} K_{\bar{S}} \leq |\bar{S}|C^2<\frac{|\bar{S}|}{4m_{LY}(\partial\Omega)^2}.
	\end{equation}Combining this and Penrose inequality in Proposition \ref{cor1} for outward minimizing minimal surface $\bar{S}$ in $\bar{\Omega}$, we have the following contradiction
	\begin{equation}
		m_{LY}(\partial\Omega) < \left(\frac{|\bar{S}|}{16\pi}\right)^{1/2}\leq m_{LY}(\partial\Omega).
	\end{equation}Therefore, $\bar{\Omega}$ has no minimal surfaces. Furthermore, by Meeks-Simon-Yau \cite{MeeksSimonYau}, $\bar{\Omega}$ is diffeomorphic to a ball in $\mathbb{R}^3$. This means there is no blow-up in any Jang graphs of $\Omega$ and therefore, there is no MOTS in $\Omega$. This also shows that the solution of Jang's equation must be unique.
\end{proof}


\begin{thebibliography}{99}
\bibitem{AlaeeCabreraPachecoMcCormick}A. Alaee, A.Cabrera Pacheco, S. McCormick, \textit{Stability of a quasi-local positive mass theorem for graphical hypersurfaces of Euclidean space}, Trans. Amer. Math. Soc. \textbf{374} (2021), 3535-3555.
	
\bibitem{AlaeeKhuriYau} A. Alaee, K. Khuri, and S.-T. Yau, \textit{Geometric Inequalities for quasi-local masses}, Comm. Math. Phys., \textbf{378} (2020), pp 467-505.

\bibitem{AnderssonMetzger} L. Andersson, and J. Metzger, \textit{The area of horizons and the trapped region}, Comm. Math. Phys., \textbf{290} (2009), 941-972.
	
	\bibitem{Bartnik1989} R. Bartnik, \textit{Some open problems in mathematical relativity}, In: Conference on Mathematical
	Relativity (Canberra, 1988), Proc. Centre Math. Anal. Australian Nat. Univ. \textbf{19}, (1989), 244–268.
	
		\bibitem{Bray} H. Bray, \textit{Proof of the Riemannian Penrose inequality using the positive mass theorem}, J. Differential Geom., \textbf{59} (2001), no. 2, 177-267.
	
	\bibitem{Corvino}J. Corvino, \textit{A note on asymptotically flat metrics on $\mathbb{R}^3$ which are scalar-flat and admit minimal spheres}, Proc. Amer. Math. Soc. \textbf{133} (2005), no. 12, 3669–3678.
	
\bibitem{Herzlich} M. Herzlich, \textit{A Penrose-like inequality for the mass of Riemannian asymptotically flat manifolds}, Comm. Math. Phys., \textbf{188} (1997), no. 1, 121-133.

	\bibitem{HuiskenIlmanen} G. Huisken, and T. Ilmanen, \textit{The inverse mean curvature flow and the Riemannian Penrose inequality}, J. Differential
	Geom., \textbf{59} (2001), no. 3,  353-437.	

    \bibitem{IyerWald} V. Iyer, R. Wald, \textit{Trapped surfaces in the Schwarzschild geometry and cosmic censorship}, Phys. Rev. D., \textbf{44} (1991), no. 12.
	
	\bibitem{Khuri} M. Khuri, \textit{A Penrose-Like inequality with charge}, Gen. Rel. Grav., \textbf{45} (2013), no. 11, 2341-2361.
	
	\bibitem{Klingenberg} W. Klingenberg: \textit{A course in differential geometry}. Graduate Texts in Mathematics, New York-Heidelberg: Springer-Verlag, \textbf{51} (1978).
	
	\bibitem{LiuYau} C.-C. Liu, and S.-T. Yau, \textit{Positivity of quasi-local mass II}, J. Amer. Math. Soc., \textit{19} (2006), no.1, 181-204.

\bibitem{MeeksSimonYau} W. Meeks III, L. Simon, and S.-T. Yau, \textit{Embedded minimal surfaces, exotic spheres, and manifolds with positive ricci curvature}, Ann. of Math. \textbf{116} (1982), no. 3, 621–659.

\bibitem{MeeksYau} W. Meeks III, S.-T. Yau, \textit{Topology of three-dimensional manifolds and the embedding problems in minimal surface theory}. Ann. of Math. \textbf{112} (1980), no. 3, 441-484.


	\bibitem{Miao} P. Miao, \textit{Positive mass theorem on manifolds admitting corners along a hypersurface}, Adv. Theor. Math. Phys., \textbf{6} (2002), no. 6, 1163-1182.
	
	\bibitem{ShiTam2002} Y. Shi, and L.-F. Tam, \textit{Positive mass theorem and the boundary behaviors of compact manifolds with nonnegative scalar curvature}, J. Differential Geom., \textbf{62} (2002), no. 1, 79-125.
	
	\bibitem{ShiTamHorizons} Y. Shi, and L.-F. Tam, \textit{Quasi-local mass and the existence of horizons}, Comm. Math. Phys., \textbf{274} (2007), no. 2, 277-295.
	
	
	\bibitem{SchoenYauI} R. Schoen, and S.-T. Yau, \textit{On the proof of the positive mass conjecture in general relativity}, Comm. Math. Phys., \textbf{65} (1979), no. 1, 45-76.
	
	\bibitem{SchoenYauII} R. Schoen, and S.-T. Yau, \textit{Proof of the positive mass theorem II}, Comm. Math. Phys., \textbf{79} (1981), no. 2, 231-260.
	
	\bibitem{SchoenYauIII} R. Schoen, and S.-T. Yau, \textit{The existence of a black hole due to condensation of matter}, Comm. Math. Phys., \textbf{90} (1983), no. 4, 575-579.
	
	\bibitem{SchoenYauL} R. Schoen, and S.-T. Yau, \textit{Lectures on Differential Geometry}, International Press, (1994).
	

	
	
	\bibitem{WangYauPMT1} M.-T. Wang, and S.-T. Yau, \textit{Isometric embeddings into the Minkowski space and new quasi-local mass}, Comm. Math. Phys., \textbf{288} (2009), no. 3, 919--942.
	
	
	\bibitem{Yau} S.-T. Yau, \textit{Geometry of three manifolds and existence of black hole due to boundary effect}, Adv. Theor. Math. Phys., \textbf{5} (2001), no. 4, 755-767.
	
	
\end{thebibliography}
\end{document}